\theoremstyle{plain}
\newtheorem{theorem}{Theorem}[section]
\newtheorem{cor}[theorem]{Corollary}
\newtheorem{prop}[theorem]{Proposition}
\newtheorem{lemma}[theorem]{Lemma}
\theoremstyle{definition}
\newtheorem{definition}[theorem]{Definition}
\newtheorem{rmk}[theorem]{Remark}
\numberwithin{equation}{section}
\newtheorem*{theoremA*}{Theorem A}
\newtheorem*{theoremB*}{Theorem B}
\newtheorem*{theoremm1*}{Theorem A'}
\newtheorem*{theoremC*}{Theorem C}
\newtheorem*{theoremD*}{Theorem D}
\newtheorem*{theoremE*}{Theorem E}
\newtheorem*{theoremF*}{Theorem F}
\newtheorem*{theoremE2*}{Theorem E2}
\newtheorem*{theoremE3*}{Theorem E3}
\newcommand{\bs}{\backslash}
\newcommand{\C}{\mathbb{C}}
\newcommand{\M}{\mathcal{M}}
\newcommand{\Q}{\mathbb{Q}}
\newcommand{\Z}{\mathbb{Z}}
\newcommand{\R}{\mathbb{R}}
\newcommand{\rk}{\operatorname{rk}}
\newcommand{\GL}{\operatorname{GL}}
\newcommand{\im}{\operatorname{im}}
\newcommand{\Ad}{\operatorname{Ad}}
\newcommand{\ad}{\operatorname{ad}}
\def\hat{\widehat}
\def\af{\mathfrak{a}}
\def\bmf{\mathfrak{b}}
\def\cf{\mathfrak{c}}
\def\df{\mathfrak{d}}
\def\gf{\mathfrak{g}}
\def\hf{\mathfrak{h}}
\def\kf{\mathfrak{k}}
\def\lf{\mathfrak{l}}
\def\mf{\mathfrak{m}}
\def\nf{\mathfrak{n}}
\def\pf{\mathfrak{p}}
\def\qf{\mathfrak{q}}
\def\sf{\mathfrak{s}}
\def\sl{\mathfrak{sl}}
\def\uf{\mathfrak{u}}
\def\zf{\mathfrak{z}}
\def\la{\langle}
\def\ra{\rangle}
\def\1{{\bf1}}
\def\M{\mathcal{M}}
\def\oline{\overline}
\title[Real spherical varieties] {The local structure theorem for real
  spherical varieties}
\subjclass[2010]{14L30, 14M17, 14M27, 22F30} \keywords{spherical
  varieties, homogeneous spaces, real reductive groups}
\begin{document}
\date{December 23, 2014}

\begin{abstract}
  Let $G$ be an algebraic real reductive group and $Z$ a real
  spherical $G$-variety, that is, it admits an open orbit for a
  minimal parabolic subgroup $P$.  xWe prove a local structure theorem
  for $Z$. In the simplest case where $Z$ is homogeneous, the theorem
  provides an isomorphism of the open $P$-orbit with a bundle
  $Q\times_L S$. Here $Q$ is a parabolic subgroup with Levi
  decomposition $L\ltimes U$, and $S$ is a homogeneous space for a
  quotient $D=L/L_{\mathrm n}$ of $L$, where $L_{\mathrm n}\subseteq
  L$ is normal, such that $D$ is compact modulo center.
\end{abstract}

\author[Knop]{Friedrich Knop} \email{friedrich.knop@fau.de}
\address{FAU Erlangen-N\"urnberg, Department Mathematik,\\Cauerstra\ss
  e 11, D-91058 Erlangen, Germany} \author[Kr\"otz]{Bernhard
  Kr\"{o}tz} \email{bkroetz@math.uni-paderborn.de}
\address{Universit\"at Paderborn, Institut f\"ur
  Mathematik,\\Warburger Stra\ss e 100, D-33098 Paderborn, Germany}
\thanks{The second author was supported by ERC Advanced Investigators
  Grant HARG 268105} \author[Schlichtkrull]{Henrik Schlichtkrull}
\email{schlicht@math.ku.dk} \address{University of Copenhagen,
  Department of Mathematics,\\Universitetsparken 5, DK-2100 Copenhagen
  \O, Denmark\\ \ \\ \ }
\maketitle

\section{Introduction}

Let $G_\C$ be a complex reductive group and $B_\C < G_\C$ a fixed
Borel subgroup.  We recall that a normal $G_\C$-variety $Z_\C$ is
called {\it spherical} provided that $B_\C$ admits an open orbit. The
local nature of a spherical variety is given in terms of the local
structure theorem, \cite{BLV}, \cite{Kn}.  In its simplest form,
namely applied to a homogeneous space $Z_\C= G_\C/H_\C$ for which
$B_\C H_\C$ is open, it asserts that there is a parabolic subgroup
$Q_\C>B_\C$ with Levi-decomposition $Q_\C = L_\C \ltimes U_\C$ such
that
the action of $Q_\C$ on $Z_\C$ induces an isomorphism of
$(L_\C/L_\C\cap H_\C)\times U_\C$ onto $B_\C H_\C$.

\par The purpose of this paper is to continue the geometric study of
real spherical varieties begun in \cite{KS}.  We let $G$ be an
algebraic real reductive group and $Z$ a normal real algebraic
$G$-variety. Then $Z$ is called {\it real spherical} provided a
minimal parabolic subgroup $P<G$ has at least one open orbit on $Z$.
With this assumption on $Z$ we prove a local structure theorem
analogous to the one above.  In particular, when applied to a
homogeneous real spherical space $Z=G/H$ with $PH$ open, it yields a
parabolic subgroup $Q> P$ with Levi-decomposition $Q=L \ltimes U$ such
that
\begin{equation*} \label{LST2} L_{\mathrm n} < Q \cap H <
  L\,.  \end{equation*} Here $L_{\mathrm n} \triangleleft L$ denotes
the product of all non-compact non-abelian normal factors of $L$.
Furthermore, the action of $Q$ induces a diffeomorphism of $(L/L\cap
H)\times U$ onto $PH$.

\par Our proof of the real local structure theorem is based on the
symplectic approach of \cite{Kn}. Our investigations also show the
number of $G$-orbits on a real spherical variety is finite. Combined
with the main result of \cite{KS} it implies that the number of
$P$-orbits on a real spherical variety is finite.

\bigskip {\bf Acknowledgments}: The authors thank the referees for
suggesting substantial improvements to the paper.

\section{Homogeneous spherical spaces}

Lie groups in this paper will be denoted by upper case Latin letters,
$A$, $B$ \ldots, and their associated Lie algebras with the
corresponding lower case Gothic letter $\af$, $\bmf$ etc.

\par For a Lie group $G$ we denote by $G_0$ its connected component
containing the identity, by $Z(G)$ the center of $G$ and by $[G,G]$
the commutator subgroup.

\par On a real reductive Lie algebra $\gf$ we fix a non-degenerate
invariant bilinear form $\kappa(\cdot, \cdot)$, for example the
Cartan-Killing form in case $\gf$ is semisimple.

A Lie group $G$ will be called {\it real reductive} provided that
\begin{itemize}
\item The Lie algebra $\gf$ is reductive.
\item There exists a maximal compact subgroup $K<G$ such that we have
  a homeomorphism (polar decomposition)
$$ K \times \sf \to G, \ \ (k,X)\mapsto k\exp(X)$$
where $\sf:= \kf^{\perp_\kappa}$
\end{itemize}

Observe that for a real reductive group the bilinear form $\kappa$ can
(and will) be chosen $K$-invariant. A real reductive group is called
\emph{algebraic} if it is isomorphic to an open subgroup of the group
of real points $G_\C(\R)$ where $G_\C$ is a reductive algebraic group
which is defined over $\R$.

Let now $G$ be a real reductive group, and let $P$ be a minimal
parabolic subgroup. The unipotent part of $P$ is denoted $N$. If a
maximal compact subgroup $K$ as above has been chosen, with associated
Cartan involution $\theta$ of $G$, a maximal abelian subspace
$\af\subset\sf$ can also be chosen.  These choices then induce an
Iwasawa decomposition $G=KAN$ of $G$ and a Langlands decomposition
$P=MAN$ of $P$. Here $M=Z_K(\af)$.  However, at present we do not fix
$K$ and $\af$.

\par Let $H$ be a closed subgroup of $G$ such that $H/H_0$ is finite.
Recall that $Z=G/H$ is said to be {\it real spherical}, if the minimal
parabolic subgroup $P$ admits an open orbit on $Z$. Furthermore, in
this case $H$ is called a {\it spherical subgroup}.  Note that $H$ is
not necessarily reductive.

\begin{rmk} Here a remark on terminology is in order. Historically,
  spherical subgroups were first introduced by M.~Kr\"amer in the
  context of compact Lie groups, see \cite{Kraemer}. However, as our
  focus is to investigate non-compact homogeneous spaces we allow a
  discrepancy between the original definition and the current one. In
  fact with our definition every closed subgroup of $G$ is spherical
  if $G$ is compact.
\end{rmk}

We denote by $z_0\in Z$ the origin of the homogeneous space $Z=G/H$.

\subsection{Semi-invariant functions and the local structure theorem}

Let $G$ be a real reductive Lie group.

\begin{definition} \label{defi semispherical} Let $Z=G/H$ with
  $H\subseteq G$ a closed subgroup.

  (1) A finite dimensional real representation $(\pi, V)$ of $G$ is
  called {\it $H$-semispherical} provided there is a cyclic vector
  $v_H \in V$ and a character $\gamma: H \to \R^\times$ such that
$$\pi(h)v_H = \gamma(h) v_H,\qquad \forall h\in H.$$  

(2) The homogeneous space $Z$ is called {\it almost algebraic} if
there exists an $H$-semispherical representation $(\pi, V)$ such that
the map
$$Z \to \mathbb{P}(V), \ \ g\cdot z_0 \mapsto [\pi(g)v_H]$$
is injective.
\end{definition}

According to a theorem of Chevalley (see \cite{Borel} Thm.~5.1),
$Z=G/H$ is almost algebraic if $G$ and $H$ are algebraic.  In the
following we always assume that $Z=G/H$ is almost algebraic.

For a reductive Lie algebra $\gf$ we write $\gf_{\mathrm n}$ for the
direct sum of the non-compact non-abelian ideals in $[\gf,\gf]$. If
$\gf$ is the Lie algebra of $G$, then $G_{\mathrm n}$ denotes the
corresponding connected normal subgroup of $[G,G]$.

\begin{theorem}[Local structure theorem, homogeneous case] \label{LST}
  Let $Z=G/H$ be an almost algebraic real spherical space, and let
  $P\subseteq G$ be a minimal parabolic subgroup such that $PH$ is
  open.  Then there is a parabolic subgroup $Q\supseteq P$ with
  Levi-decomposition $Q= LU $ such that:
  \begin{enumerate}
  \item The map
$$ Q \times_L (L/L\cap H) \to Z, \ \ [q,l (L\cap H)]\mapsto ql\cdot z_0$$
is a $Q$-equivariant diffeomorphism onto $Q\cdot z_0\subseteq Z$.
\item $Q\cap H\subseteq L$.
\item $L_{\mathrm n}\subseteq H$.
\item $(L\cap P) (L\cap H)=L$.
\item $QH=PH$.
\end{enumerate}
\end{theorem}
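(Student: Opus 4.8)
The plan is to build the parabolic $Q$ from a suitable $H$-semispherical representation, mimicking the complex local structure theorem of \cite{BLV}, \cite{Kn} but paying attention to the fact that $P$ is only a minimal parabolic. Concretely, take an $H$-semispherical representation $(\pi,V)$ with cyclic vector $v_H$ and character $\gamma\colon H\to\R^\times$ realizing the almost-algebraic embedding $Z\hookrightarrow\mathbb P(V)$. Since $PH$ is open, the orbit $P\cdot[v_H]$ is open in $\overline{G\cdot[v_H]}$, so the line $\R v_H$ is a highest-weight line for $P$ in the sense that $P$ stabilizes a subspace and acts on $\R v_H$ by a character; more precisely, after passing to the subrepresentation generated by $v_H$ one decomposes $V$ under $A$ (once $K$, $\af$ are fixed, allowed since the statement does not fix them) into weight spaces and lets $\lambda\in\af^*$ be the weight of $v_H$, which must be a highest weight for the $P$-action. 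Then set $Q = \{g\in G : g\cdot[\text{line through top-weight part}] = [\ldots]\}$; equivalently $Q$ is the stabilizer in $G$ of the $P$-invariant subspace spanned by the $A$-weight spaces of weight $\lambda$, together with the flag it determines. One checks $Q\supseteq P$ is parabolic, and choosing the Levi $L$ of $Q$ containing $A$ (the $\theta$-stable Levi) gives the Levi decomposition $Q=LU$ with $U$ the unipotent radical.

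The next block of steps establishes (1). Let $V_\lambda^+ = \bigoplus_{\mu\geq\lambda}V_\mu$ be the sum of $A$-weight spaces of weight $\geq\lambda$ and $V_\lambda = V_\lambda^+ / (\text{span of }\mu>\lambda)$, on which $L$ acts and $U$ acts trivially; the vector $v_H$ projects to a nonzero $\bar v_H\in V_\lambda$. The map $G\to\mathbb P(V)$, composed with projection to $\mathbb P(V_\lambda)$ wherever defined, gives a $Q$-equivariant map on $Q\cdot z_0$; to get the bundle isomorphism one shows the natural map $Q\times_L(L/L\cap H)\to Z$ is (i) well-defined: this needs $L\cap H$ to stabilize the line $\R\bar v_H$, i.e. $\gamma|_{L\cap H}$ controls the $L$-action, which follows since $U\cap H$ acts trivially on $V_\lambda$ and $h\mapsto\gamma(h)$ is the eigenvalue; (ii) injective and submersive: injectivity on the $U$-direction follows because the exponential map $\uf\to U$ composed with $u\mapsto u\cdot\bar v$ is a polynomial isomorphism onto an affine space transverse to $V_\lambda$ (the standard ``big cell'' argument), and injectivity in the base reduces to injectivity of $L/L\cap H\to\mathbb P(V_\lambda)$; (iii) open onto $P\cdot z_0\subseteq Q\cdot z_0$, whence onto the full $Q$-orbit by $Q$-equivariance. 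Here I would lean on the real reductive polar decomposition and on \cite{KS} for transversality statements rather than redo them.

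Parts (2)–(5) are then extracted from the geometry. For (2): if $h\in Q\cap H$ then $h$ fixes $z_0$ and normalizes the flag defining $Q$, but also $U$ acts freely transversally in (1), so $Q\cap H$ meets $U$ trivially, giving $Q\cap H\subseteq L$ after conjugating into the Levi — more carefully, $Q\cap H$ is contained in a Levi and one checks it can be taken to be $L$ because $PH$ open forces the $L\cap P$-orbit through $\bar z_0$ in $L/L\cap H$ to be open, pinning down which Levi. For (4): $(L\cap P)(L\cap H)=L$ is exactly the statement that $L\cap P$ acts with open (hence, since $L/L_{\mathrm n}$ is compact modulo center and $L\cap P$ is a minimal parabolic of $L$, all of) orbit on $L/L\cap H$; the openness is inherited from $PH$ open via (1), and then a connectedness/compactness argument upgrades open to everything. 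For (5): $QH\supseteq PH$ is trivial since $Q\supseteq P$, and $QH=(L\cap P)(L\cap H)UH\subseteq (L\cap P)UH = PH$ using (4) and $(L\cap P)U = P$. Part (3), $L_{\mathrm n}\subseteq H$, is the subtle one and I expect it to be the main obstacle: one must show the non-compact non-abelian normal factors of $L$ already lie in $H$. The idea is that $L_{\mathrm n}$ acts on $L/L\cap H$, which by (4) is a real spherical space for the compact-mod-center-and-solvable structure; a normal semisimple non-compact factor acting on a space where a minimal parabolic of $L$ has open orbit, combined with $L/L_{\mathrm n}$ being compact modulo center, forces that factor to act trivially, i.e. to lie in the (almost) kernel of the action on $L/L\cap H$, which after the almost-algebraic normalization means $L_{\mathrm n}\subseteq H$. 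Making this rigorous requires understanding semi-invariant characters: $L_{\mathrm n}=[L_{\mathrm n},L_{\mathrm n}]$ has no nontrivial characters, so the $L$-eigenvector $\bar v_H$ is $L_{\mathrm n}$-fixed, hence $L_{\mathrm n}$ fixes $[\bar v_H]$; then one argues $L_{\mathrm n}$ fixes $v_H$ itself (lifting from $V_\lambda$ to $V$ using that $L_{\mathrm n}$ commutes with $A$ and preserves weight spaces and is generated by unipotents outside $U$), and since the embedding $Z\to\mathbb P(V)$ is injective this gives $L_{\mathrm n}\subseteq H$. The delicate point throughout is controlling the interaction of the real weight-space decomposition under $A$ with the semisimple factors, and ensuring the various ``open implies everything'' upgrades are valid in the real (not algebraic-group) category — this is where the hypotheses that $G$ is algebraic real reductive and $Z$ almost algebraic get used.
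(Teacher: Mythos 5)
There are two genuine gaps in your proposal, and they occur right at the heart of the argument. The first is the construction of $Q$. You treat $v_H$ as though it were an $A$-weight vector (``lets $\lambda\in\af^*$ be the weight of $v_H$'') and even claim that $P$ ``acts on $\R v_H$ by a character.'' But $v_H$ is an $H$-eigenvector, not an $A$- or $P$-eigenvector; $H$ bears no special relation to the chosen $A$, so $v_H$ has no single weight. Worse, if $P$ did stabilize $\R v_H$ then $P\cdot[v_H]$ would be a point, contradicting openness unless $Z$ is trivial — the open $P$-orbit corresponds to $v_H$ being \emph{generic}, the opposite of extremal. The paper's fix is to choose the extremal weight vector on the \emph{dual} side: take $v^*\in V^*$ with $\R v^*$ an $AN$-fixed line (and, by Lemma~\ref{extra}, with nontrivial $\chi$ on $G_{\mathrm n}\cap A$), form the matrix coefficient $f(g)=v^*(\pi(g)v_H)$ which is left-$AN$- and right-$H$-semi-invariant, then pass to $F(g)=\int_M f(mg)^2\,dm$ to upgrade to full left-$P$-semi-invariance and positivity. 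The parabolic $Q$ is the stabilizer of $\R F$ under left translation, and a moment-type map $\mu(z)(X)=dF/F$ together with the nilpotency lemma $e^{\ad\uf}X=X+[X,\uf]$ gives the diffeomorphism in (1) cleanly. Your ``big cell'' transversality argument could plausibly be made to work once $Q$ is correctly defined, but as stated the definition of $Q$ is not well-founded.

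The second gap is the attempt to prove (3) in a single step. You argue that $L_{\mathrm n}$ fixes $\bar v_H$ because $L_{\mathrm n}$ has no nontrivial characters — but that only helps if you already know $\bar v_H$ is $L$-semi-invariant (or at least that $L_{\mathrm n}$ preserves the line), and you have not shown this; $\bar v_H$ is at best $L\cap H$-semi-invariant. In fact a single pass of the construction does \emph{not} in general yield $L_{\mathrm n}\subseteq H$. The paper's proof is iterative: after one step it checks whether (3) holds for the new Levi $L_1=L$; if not, it applies the same construction to the real spherical space $L_1/L_1\cap H$ (whose openness of the $L_1\cap P$-orbit is inherited from (1)), obtaining a strictly smaller parabolic. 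Strict descent is guaranteed because $Q\cap G_{\mathrm n}\subsetneq G_{\mathrm n}$, which follows from the nontriviality of $\chi$ on $G_{\mathrm n}\cap A$, and termination because the Levi of $P$ has trivial non-compact semisimple part. As you yourself observe, (4) and (5) reduce to (3), so without the iteration the key conclusions remain unproven.
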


\begin{proof} The proof consists of an iterative procedure, in which
  we construct a strictly decreasing sequence of parabolic subgroups
$$Q_0\supset Q_1\supset \dots \supset P$$ 
and corresponding Levi subgroups $L_0\supset L_1\supset \dots,$ all
satisfying (1).  Note that (2) is an immediate consequence of (1).
After a finite number of steps a parabolic subgroup is reached which
also satisfies (3)--(5).

Let $Q_0=G$. It clearly satisfies (1).  If $G_{\mathrm n}\subseteq H$
then $PH=G$ since $P$ contains both the center of $G$ and every
compact normal subgroup of $[G,G]$.  Hence in this case $Q=Q_0$ solves
(1)--(5).  Note also that since $L\cap P$ is a minimal parabolic
subgroup of $L$, the argument just given, but applied to $L$, shows
that (4) and (5) are consequences of (3).

Assume now that $G_{\mathrm n}\not\subseteq H$.  By our general
assumption on $Z$ there is a finite dimensional representation
$(\pi,V)$ of $G$ and a vector $v_H\in V$ satisfying all the properties
of Definition \ref{defi semispherical}.  The assumption on $G_{\mathrm
  n}$ implies that $\pi(g)v_H\notin \R v_H$ for some $g\in G_{\mathrm
  n}$, hence $\pi$ does not restrict to a multiple of the trivial
representation of $G_{\mathrm n}$.

Choose a Cartan involution for $G$ and a maximal abelian subspace
$\af\subset\sf$, but note that these choices may be valid only for the
current step of the iteration.  Let $v^*\in V^*\setminus\{0\}$ be an
extremal weight vector so that the line $\R v^*$ is fixed by $AN$, say
$\pi^*(g)v^*=\chi(g)v^*$ for $g\in AN$ and some character $\chi: AN\to
\R^\times$. Now we need the following

\begin{lemma}\label{extra}
  Let $G$ be a connected semisimple Lie group without compact factors,
  and with minimal parabolic $P=MAN\subseteq G$. Let $V$ be a
  non-trivial finite dimensional irreducible real representation of
  $G$. Then $V^{AN}=\{0\}$.
\end{lemma}

\begin{proof}
  Let $\bar N=\theta(N)$ be the unipotent part of the parabolic
  subgroup $\theta(P)$ opposite to $P$.  It follows from the
  representation theory of $\sl(2,\R)$ that vectors in $V^{AN}$ are
  also fixed by $\bar N$.  Since $G$ has no compact factors it is
  generated by $\bar N$ and $AN$, hence $V^{AN}=V^G=\{0\}$.
\end{proof}

By this lemma and what was just seen, we can choose $v^*$ such that
$\chi$ is nontrivial on $G_{\mathrm n}\cap A$.  The matrix coefficient
$$f(g) := v^*(\pi(g) v_H)\qquad (g\in G)$$
satisfies $f(angh)= \chi(a)^{-1} \gamma(h)f(g)$ for all $g\in G$,
$an\in AN$ and $h\in H$.  As $v_H$ is cyclic and $v^*$ non-zero, and
as $PH$ is open, $f$ is not identically zero on $M$.

\par We construct a new function:
$$F(g):= \int_M  f(mg)^2 \ dm \qquad (g\in G) \, .$$
This function is smooth, $G$-finite, non-negative valued, and
satisfies
\begin{equation}\label{F(mangh)}
  F(man gh) =\chi(a)^{-2} \gamma^2(h)F(g)
\end{equation}
for all $g\in G$, $man\in P$ and $h\in H$.  Furthermore, $F(e)> 0$.

\par 
It follows from the $G$-finiteness together with (\ref{F(mangh)}) that
$F$ is a matrix coefficient
$$ F(g) = w^*(\rho(g) w_{H}) $$
of a finite dimensional representation $(\rho, W)$ of $G$, with
non-zero vectors $w_{H}\in W$ and $w^*\in W^*$ such that
$$\rho(h)w_H=\gamma(h)^2w_H, \quad \rho^*(man)w^*=\chi(a)^2w^*$$
for all $h\in H$ and $man\in P=MAN$. Here $W^*$ can be chosen to be
the span of all left translates of $F$. Since $F$ is a highest weight
vector, $W^*$ and hence $W$ is irreducible. Define $\nu\in\af^*$ by
$$e^{\nu(X)}=\chi(\exp X)^2,$$ 
then $\nu$ is the highest $\af$-weight of $\rho^*$, and it is dominant
with respect to the set $\Sigma(\af,\nf)$ of $\af$-roots in $\nf$.

\par Now define a subgroup $Q_1=Q\subseteq G$ to be the stabilizer of
$\R w^*$,
$$Q=\{g\in G\mid \rho^*(g)w^*\in\R w^*\},$$ 
and define a character $\psi: Q\to \R^\times$ by
$$\rho^*(g)w^*=\psi(g)w^*.$$ 
In particular, we see that $Q$ is a parabolic subgroup that contains
$P$. Moreover $\psi: Q\to\R$ extends $\chi^{2}:AN\to \R^+$.  Let
$U\subseteq Q$ be the unipotent radical of $Q$, its Lie algebra is
spanned by the root spaces of the roots $\alpha\in\Sigma(\af,\nf)$ for
which $\langle\alpha,\nu\rangle >0$.

Note that since $w_{H}$ is cyclic, $\rho^*(g)w^* = c w^*$ if and only
if $F(g^{-1}x)=cF(x)$ for all $x\in G$.  Hence
$$Q=\{g\in G\mid F(g\,\cdot) \text{ is a multiple of } F\}$$ 
and $F(q\,\cdot)=\psi(q)F$ for all $q\in Q$.  (We use the symbol
$F(g\,\cdot)$ for the function $x\mapsto F(gx)$ on $G$.)

We note that $Q\cap G_{\mathrm n}$ is a proper subgroup of $G_{\mathrm
  n}$, for otherwise $\rho^*$ would be one-dimensional spanned by
$w^*$, and this would contradict the non-triviality of its highest
weight $e^\nu=\chi^2$ on $G_{\mathrm n}\cap A$.

Set $Z_0:=QH\subseteq Z$, then $Z_0$ is open since $qPH$ is open for
each $q\in Q$.  Following \cite{Kn}, Th. 2.3, we define a moment-type
map:
$$\mu: Z_0 \to \gf^*, \ \ \mu(z)(X):= \frac {dF(q) (X)}{F(q)} = \frac{d}{dt}\Big|_{t=0} 
\frac{F(\exp(tX) q)}{F(q)}$$ for $q\in Q$ such that $z=qH \in Z_0$ and
$ X\in \gf$.  Note that this map is well-defined: $F(q)\neq 0$ for
$q\in Q$, and if $q\cdot z_0 = q'\cdot z_0$ then $q=q'h$ for some
$h\in H$.

We let $G$ act on $\gf^*$ via the co-adjoint action and record:

\begin{lemma} The map $\mu$ is $Q$-equivariant.
\end{lemma}

\begin{proof} Let $z\in Z_0$, $q\in Q$ and $Y\in \gf$. Then
  \begin{align*} \mu(qz)(Y) & = \frac {\frac{d}{dt}\Big|_{t=0} F
      (\exp(tY) qz)}{ F(qz)}=
    \frac {\frac{d}{dt}\Big|_{t=0} F (q q^{-1}\exp(tY) qz)}{\psi(q) F(z)}\\
    &= \frac{\frac{d}{dt}\Big|_{t=0} F (\exp(t\Ad(q^{-1})Y) z)}{F(z)}
    = (\Ad^*(q) \mu(z))(Y)\, .\tag*{\qedhere}
  \end{align*}
\end{proof}

Note that
\begin{equation} \label{mupsi} \qquad \mu(z)(X)= d\psi(X),\quad (X\in
  \qf)
\end{equation}
for all $z\in Z_0$. In particular,
$\mu(z_1)-\mu(z_2)\in\qf^\perp\subseteq\gf^*$ for $z_1,z_2\in
Z_0$. Moreover, $\mu(z)(X+Y)=-\nu(X)$ for $X\in\af$ and $Y\in
\mf+\nf$.

We now identify $\gf^*$ with $\gf$ via the invariant non-degenerate
form $\kappa(\cdot, \cdot)$, then $\qf^\perp$ is identified with
$\uf$, and $(\mf+\nf)^\perp$ with $\af+\nf$.
Let $$X_0=\mu(z_0)\in\af+\nf,$$ then $X_0\notin\nf$ since $\nu\neq 0$
and hence $X_0$ is a semisimple element.  Write $X_{s}$ for the
$\af$-part of $X_0$, then the eigenvalues of $\ad(X_0)$ on $\nf$ are
the $\alpha(X_{s})$ where $\alpha\in\Sigma(\af,\nf)$. By the
identification of $\gf^*$ with $\gf$ these are the inner products
$\langle -\nu,\alpha\rangle$, in particular they are all $\leq 0$ and
on $\uf$ they are $<0$.

We conclude from the above that $\im \mu \subseteq X_0 +\uf.$ We claim
equality:
\begin{equation} \label{adeq} \im\mu=X_0 + \uf \, .\end{equation} As
$\mu$ is $Q$-equivariant we have $\im \mu =\Ad(Q)X_0.$ The lemma below
(with $X=-\ad(X_0)$) implies $\Ad(U)X_0= X_0 + \uf$, and then
(\ref{adeq}) follows.

\begin{lemma} Let $\uf$ be a nilpotent Lie algebra and $X:\uf \to \uf$
  a derivation which is diagonalizable with non-negative eigenvalues.
  Then in the solvable Lie algebra $\gf:=\R X \ltimes \uf$ the
  following identity holds:
  \begin{equation}\label{e^ad}
    e^{\ad \uf} X  = X + [X,\uf]\, .
  \end{equation}
\end{lemma}

\begin{proof} 
  Note that $[X,\uf]=\uf$ if all eigenvalues are positive.  The
  inclusion $\subseteq$ in (\ref{e^ad}) is easy.  The proof of the
  opposite inclusion is by induction on $\dim\uf$, and the case
  $\dim\uf=0$ is trivial. Assume $\dim\uf>0$ and let
  $\uf=\sum_{\lambda\ge 0} \uf (X, \lambda)$ be the eigenspace
  decomposition of the operator $X: \uf \to \uf$. Let $\lambda_1\ge 0$
  be the smallest eigenvalue and set $\uf_1:=\uf (X, \lambda_1)$ and
  $\uf_2:=\sum_{\lambda>\lambda_1} \uf (X, \lambda)$.  Note that
  $\uf_2$ is an ideal in $\uf$, and $\uf=\uf_1 + \uf_2$ as vector
  spaces.
  \par
  By induction we have $e^{\ad \uf_2}X= X + \uf_2$.  If $\lambda_1=0$
  then $[X,\uf]=\uf_2$, and we are done.  Otherwise
  $[\uf_1,\uf_1]\subseteq\uf_2$ and hence
$$e^{\ad U}X\in X+\lambda_1 U+\uf_2.$$
for $U\in\uf_1$.  Note that $e^{\ad \uf}$ is a group as $\uf$ is
nilpotent. It follows that
\begin{align*} e^{\ad \uf}X & \supseteq e^{\ad \uf_1} e^{\ad \uf_2}X = e^{\ad \uf_1} (X + \uf_2)\\
  & =\bigcup_{U\in \uf_1} e^{\ad U} (X + \uf_2)= \bigcup_{U\in
    \uf_1}(e^{\ad U}X + \uf_2) \\ &= \bigcup_{U\in \uf_1} (X
  +\lambda_1 U +\uf_2)= X +\uf\,.\tag*{\qedhere}
\end{align*}
\end{proof}

\par Continuing with the proof of Theorem \ref{LST}, we conclude that
the stabilizer $L\subseteq Q$ of $X_0\in\qf$ is a reductive
Levi-subgroup. Let
$$S:= \mu^{-1} (X_0)=\{z\in Z_0\mid \mu(z)=X_0\},$$ 
then for $q\in Q$ we have
\begin{equation}\label{L on S}
  qz_0\in S \Leftrightarrow \mu(qz_0)=X_0  \Leftrightarrow q X_0=X_0  \Leftrightarrow q\in L.
\end{equation}
Hence $L$ acts transitively on $S$. As $\mu$ is submersive, $S$ is a
submanifold of $Z_0$ and we obtain with
\begin{equation}\label{S-diffeo}
  Q\times_{L} S \to Z_0\, .
\end{equation}
a $Q$-equivariant diffeomorphism. As $L$-homogeneous space $S$ is
isomorphic to $L/L\cap H$.  Hence (1) is valid.

Note that (\ref{L on S}) implies that $(L\cap P)H=S\cap (PH)$, which
is open in $S$.  Thus $L/L\cap H$ is a real spherical space.

If (3) is valid, we are done. Otherwise we let $Q_1=Q$ and consider
the real spherical space $Z_1=L_1/L_1\cap H$ for $L_1=L$.  Iterating
the procedure of before yields a proper parabolic subgroup $R$ of
$L_1$ containing $L_1\cap P$ and with a Levi subgroup $L_2\subseteq
L_1$ such that
\begin{equation}\label{property of R}
  (R\cap N)\times L_2/(L_2\cap H) \to R\cdot z_0
\end{equation}
is a diffeomorphism.  We let $Q_2=RP=RU_1$, which is a subgroup since
$R$ normalizes $U_1$. Note that (\ref{property of R}) together with
the property (1) for $Q_1$ implies that this property is valid also
for $Q_2$.  We continue iterations until $H$ contains the non-compact
semisimple part of some $L_i$.  This will happen eventually since the
non-compact semisimple part of a Levi subgroup of $P$ is trivial.
\end{proof}

\subsection{$Z$-adapted parabolics}

\begin{definition}\label{defi Z-adapted} 
  Let $Z=G/H$ be a real spherical space.  A parabolic subgroup $Q<G$
  will be called {\it $Z$-adapted} provided that
  \begin{enumerate}
  \item There is a minimal parabolic subgroup $P\subseteq Q$ with $PH$
    open.
  \item There is a Levi decomposition $Q=LU$ such that $Q\cap
    H\subseteq L$.
  \item $\lf_{\mathrm n} \subseteq \hf$.
  \end{enumerate}
  A parabolic subalgebra $\qf$ of $\gf$ is called $Z$-adapted if it is
  the Lie algebra of a $Z$-adapted parabolic subgroup $Q$.
\end{definition}

\begin{theorem}\label{unique}
  Let $Z=G/H$ be an almost algebraic real spherical space and $P$ a
  minimal parabolic subgroup such that $PH$ is open.  Then there
  exists a unique parabolic subgroup $Q\supseteq P$ with unipotent
  radical $U$ such that $\uf$ is complementary to $\nf\cap \hf$ in
  $\nf$.  Moreover, this parabolic subgroup $Q$ is $Z$-adapted, and it
  is the unique parabolic subgroup above $P$ with that property.
\end{theorem}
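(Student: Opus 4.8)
The plan is to derive Theorem~\ref{unique} from the local structure theorem, Theorem~\ref{LST}, by first establishing existence and then uniqueness. For existence, I would take the parabolic $Q \supseteq P$ produced by Theorem~\ref{LST}, with Levi decomposition $Q = LU$ satisfying (1)--(5). I claim this $Q$ has $\uf$ complementary to $\nf \cap \hf$ in $\nf$. Indeed, since $P \subseteq Q$ we have $\nf = \uf \oplus (\nf \cap \lf)$, because $\uf$ is spanned by the root spaces of roots in $\Sigma(\af,\nf)$ that do not lie in $\lf$ (with respect to a Cartan involution and maximal abelian subspace $\af$ adapted to the final step of the iteration; note $L \cap P = L \cap MAN$ is a minimal parabolic of $L$ with the same $\af$). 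So it suffices to show $\nf \cap \lf \subseteq \hf$; equivalently $\nf \cap \lf = \nf \cap \lf \cap \hf$. Now by property (4), $L = (L \cap P)(L \cap H)$, and $L \cap P = M_L A (N \cap L)$ where $M_L \subseteq M$; combined with (3), $\lf_{\mathrm n} \subseteq \hf$, and the fact that the only part of $\lf$ not visible in a minimal parabolic $\lf \cap \pf$ plus $\lf_{\mathrm n}$ is compact or central, one gets that $\nf \cap \lf$ — being a nilpotent subalgebra of $\lf$ contained in $\lf \cap \pf$ — must actually lie in $\hf$. More carefully: $\nf \cap \lf$ is the nilradical of $\lf \cap \pf$, and since $\lf/\lf\cap\hf$ is a spherical space for which $L\cap P$ is open, combined with $Q \cap H \subseteq L$ so $L \cap H = Q \cap H$, the identity $(L\cap P)(L\cap H) = L$ forces $\dim(\nf \cap \lf) \le \dim(\nf \cap \lf \cap \hf)$ by a dimension count on $L/(L\cap H)$ versus $(L\cap P)/(L\cap P \cap H)$.

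For uniqueness, suppose $Q'$ is another parabolic above $P$ whose unipotent radical $\uf'$ is complementary to $\nf \cap \hf$ in $\nf$. Parabolics containing a fixed minimal parabolic $P$ are in bijection with subsets of the simple restricted roots, and $\uf$ resp. $\uf'$ are sums of root spaces over the positive roots \emph{not} in the corresponding Levi. The condition that $\uf$ is a vector-space complement to $\nf \cap \hf$ in $\nf$ pins down $\uf$ as a set of root spaces: it determines, for each root height layer, the dimension, but in fact I would argue it determines $\uf$ exactly. The key point is that $\nf \cap \hf$ is a subalgebra (indeed $\hf \cap \nf$ is $\af$-stable? — this needs care, as $\hf$ need not be $\af$-stable) — here one should instead use that the sum of root spaces $\uf$ is an \emph{ideal} normalized by $\af$, and a complement that is simultaneously such an ideal is unique once we know which roots it must contain. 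Concretely, $Q$ is determined by the requirement: a simple root $\alpha$ has its root space in $\uf$ iff $\gf_\alpha \not\subseteq \hf + \sum_{\beta \ne \alpha}\gf_\beta$ type condition; two parabolics satisfying the complementarity must have the same such set, hence coincide.

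The main obstacle, I expect, is the uniqueness argument, specifically handling the fact that $\hf$ itself need not be $\af$-stable, so $\nf \cap \hf$ is not literally a sum of root spaces and one cannot naively compare $\uf$ and $\uf'$ root-by-root. The plan to get around this is to observe that both $\uf$ and $\uf'$, being unipotent radicals of parabolics above $P$, \emph{are} sums of $\af$-root spaces and are $\af$-stable, and to use the $\af$-action: decomposing $\nf = \bigoplus_\alpha \gf_\alpha$, the projection $\nf \to \nf/\uf \cong \bigoplus_{\alpha \notin \Sigma(\af,\uf)}\gf_\alpha$ restricted to $\nf \cap \hf$ is a linear isomorphism; this is an $\af$-equivariant statement only if we first average, but in any case it forces, for each $\af$-weight $\alpha$, that $\dim(\gf_\alpha \cap [\nf\cap\hf + \uf]) $ matches up, and a careful induction on root height (using that $\uf$ and $\nf \cap \lf$ are each subalgebras and $\uf$ is an ideal) shows $\Sigma(\af,\uf) = \Sigma(\af,\uf')$, hence $Q = Q'$. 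Finally, the $Z$-adaptedness of this unique $Q$ is immediate from properties (2) and (3) of Theorem~\ref{LST}, and uniqueness among $Z$-adapted parabolics above $P$ follows because any $Z$-adapted $Q$ automatically has $\uf$ complementary to $\nf \cap \hf$ in $\nf$ — again via the dimension count using $Q \cap H \subseteq L$ together with $PH$ open.
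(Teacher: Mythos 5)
Your overall strategy (existence from Theorem~\ref{LST}, then uniqueness) matches the paper's, but there are two issues.

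In the existence step, the claim ``it suffices to show $\nf\cap\lf\subseteq\hf$'' is incomplete. Showing $\nf\cap\lf\subseteq\hf$ only gives $\uf+(\nf\cap\hf)=\nf$; to get the \emph{direct} sum you also need $\uf\cap\hf=\{0\}$, which comes from property~(2), $Q\cap H\subseteq L$, not from~(3). The paper handles this symmetrically: (2) gives $\nf\cap\hf\subseteq\nf\cap\lf$, (3) gives $\nf\cap\lf\subseteq\nf\cap\hf$, so $\nf\cap\hf=\nf\cap\lf$, which is a Levi complement of $\uf$ in $\nf$. Your invocation of property~(4) and a dimension count on $L/(L\cap H)$ is an unnecessary detour; the point is simply that $\nf\cap\lf$ is the nilradical of the minimal parabolic $\lf\cap\pf$ of $\lf$, hence lies in $\lf_{\mathrm n}\subseteq\hf$.

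The uniqueness step has a genuine gap, which you yourself flag. Your plan is to characterize $\uf$ root-by-root from $\nf\cap\hf$, but as you correctly note, $\nf\cap\hf$ is \emph{not} $\af$-stable, so it is not a sum of root spaces, and the proposed ``induction on root height'' with a vague ``$\gf_\alpha\not\subseteq\hf+\sum_{\beta\ne\alpha}\gf_\beta$ type condition'' is never actually formulated or proved. The paper sidesteps this entirely by a comparison against the already-constructed $Q$: if $\uf'$ is any unipotent radical complementary to $\nf\cap\hf$, then $\uf'\cap\lf\subseteq\nf\cap\lf\subseteq\lf_{\mathrm n}\subseteq\hf$, while $\uf'\cap\hf=\{0\}$ by complementarity, so $\uf'\cap\lf=\{0\}$; a short lemma (proved from the classification of parabolics above $\pf$ by sets of simple roots) then gives $\qf\subseteq\qf'$, hence $\uf'\subseteq\uf$, and equality follows since both have the dimension of $\nf/(\nf\cap\hf)$. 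Note that this argument never needs to analyze the position of $\hf$ relative to the root decomposition of $\nf$; it only uses that the constructed $Q$ is $Z$-adapted. Until your root-height induction is actually written out (and I suspect it cannot be, at least not easily, precisely because of the $\af$-instability of $\hf$), the uniqueness claim is unproved.

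The final paragraph, deducing $Z$-adaptedness of the unique $Q$ and uniqueness among $Z$-adapted parabolics, is correct and matches the paper's reasoning.
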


\begin{proof} 
  Note first that if $Q\supseteq P$ and $Q=LU$ is a Levi decomposition
  then $\nf=(\nf\cap\lf)\oplus\uf$.  Assuming in addition (2) and (3)
  above then $\nf\cap\hf=\nf\cap\lf$, and hence $\nf\cap\hf$ is
  complementary to $\uf$.  Hence every $Z$-adapted parabolic subgroup
  $Q\supseteq P$ has this property of complementarity. In particular,
  this holds then for the parabolic subgroup $Q$ constructed with
  Theorem \ref{LST}.

  It remains to prove that if $Q'\supseteq P$ is another parabolic for
  which the unipotent radical $\uf'$ is complementary to $\nf\cap\hf$,
  then $Q'=Q$.  Since $\lf_n\subset \hf$ we find
$$\uf'\cap \lf\subseteq \uf'\cap\hf= \{0\}.$$
The lemma below now implies $\uf\supseteq\uf'$.  But then $\uf=\uf'$
since both spaces are complementary to $\nf\cap\hf$, and hence $Q=Q'$.
\end{proof}

\begin{lemma} Let $\pf$ be a minimal parabolic subalgebra, and let
  $\qf, \qf'\supseteq\pf$ be parabolic subalgebras with unipotent
  radicals $\uf$, $\uf'$.  If there exists a Levi decomposition
  $\qf=\lf+\uf$ such that $\lf\cap \uf'=\{0\}$, then $\qf\subseteq
  \qf'$.
\end{lemma}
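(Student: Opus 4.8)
The plan is to reduce the assertion to a transparent combinatorial fact about standard parabolic subalgebras, the only real work being to show that the arbitrary Levi $\lf$ appearing in the hypothesis may be replaced by the standard one.

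Fix a minimal parabolic $\pf=\mf\oplus\af\oplus\nf$, let $\Sigma$ be the set of restricted roots of $\af$ in $\gf$, $\Sigma^+=\Sigma(\af,\nf)$ the positive system determined by $\nf$, and $\Delta$ the simple roots. Since $\qf\supseteq\pf$ and $\qf'\supseteq\pf$, both are standard, so $\qf=\qf_I$ and $\qf'=\qf_J$ for subsets $I,J\subseteq\Delta$, with unipotent radicals $\uf=\uf_I=\bigoplus_{\alpha\in\Sigma^+\setminus\langle I\rangle}\gf_\alpha$ and $\uf'=\uf_J=\bigoplus_{\alpha\in\Sigma^+\setminus\langle J\rangle}\gf_\alpha$, where $\langle I\rangle=\Sigma\cap\Z I$ (in particular $\uf,\uf'\subseteq\nf$); the standard Levi of $\qf$ is $\lf_I=\mf\oplus\af\oplus\bigoplus_{\alpha\in\langle I\rangle}\gf_\alpha$. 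Recall that for standard parabolics $\qf\subseteq\qf'\iff I\subseteq J$. I would first observe that it therefore suffices to prove $\lf_I\cap\uf'=\{0\}$: if $I\not\subseteq J$, pick $\alpha\in I\setminus J$; then $\gf_\alpha\subseteq\lf_I$ since $\alpha\in\langle I\rangle$, while $\gf_\alpha\subseteq\uf_J=\uf'$ since $\alpha$ is simple and $\alpha\notin J$, so $\alpha\in\Sigma^+\setminus\langle J\rangle$; hence $\{0\}\neq\gf_\alpha\subseteq\lf_I\cap\uf'$.

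So the crux is to deduce $\lf_I\cap\uf'=\{0\}$ from the given $\lf\cap\uf'=\{0\}$. The key point is that $\uf$ normalizes $\uf'$: for $\beta\in\Sigma^+$ and $\alpha\in\Sigma^+\setminus\langle J\rangle$, if $\alpha+\beta$ is a root then $\alpha+\beta\in\Sigma^+\setminus\langle J\rangle$, because some simple root not in $J$ occurs in $\alpha$ with positive coefficient and the coefficients cannot decrease when the positive root $\beta$ is added; as $\uf\subseteq\nf$ this gives $[\uf,\uf']\subseteq[\nf,\uf_J]\subseteq\uf_J=\uf'$. Now invoke the standard fact that the Levi factors of the parabolic $\qf$ form a single conjugacy class under its unipotent radical $U=\exp\uf$, so $\lf=\Ad(u)\lf_I$ for some $u\in U$. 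Since $U$ normalizes $\uf'$ we have $\Ad(u)\uf'=\uf'$, and hence $\Ad(u)(\lf_I\cap\uf')=\Ad(u)\lf_I\cap\Ad(u)\uf'=\lf\cap\uf'=\{0\}$, so $\lf_I\cap\uf'=\{0\}$. Combined with the previous paragraph this forces $I\subseteq J$, i.e. $\qf\subseteq\qf'$.

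I expect the main obstacle to be precisely the presence of an arbitrary, possibly non-standard, Levi $\lf$: the naive idea of projecting $\uf'\subseteq\qf=\lf\oplus\uf$ onto $\lf$ does not work, because that projection need not map $\uf'$ back into $\uf'$, so one cannot read off $\uf'\subseteq\uf$ directly from $\lf\cap\uf'=\{0\}$. The normalization property $[\uf,\uf']\subseteq\uf'$ is exactly what repairs this; everything else is bookkeeping with restricted roots. One could alternatively avoid the conjugacy of Levi factors by applying Mostow's conjugacy theorem to the reductive subalgebra $\gf_\alpha\oplus[\gf_\alpha,\gf_{-\alpha}]\oplus\gf_{-\alpha}$ for $\alpha\in I\setminus J$ and checking, by the same root-coefficient estimate, that a $U$-conjugate of $\gf_\alpha$ lands inside $\uf'$; but the route through conjugacy of Levi factors is cleaner.
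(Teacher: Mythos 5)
Your proof is correct and follows the same route the paper intends: the paper's entire proof is the one-line remark that the claim ``follows easily from the standard description of the parabolic subalgebras containing $\pf$ by sets of simple roots'', and your argument is a complete elaboration of exactly that. You also correctly identify and handle the one genuinely non-obvious point that the paper glosses over --- the Levi $\lf$ in the hypothesis need not be the standard one (in the application it is a centralizer $Z_\gf(X_0)$ produced by the moment map construction), and your reduction to $\lf_I$ via conjugacy under $\exp\uf$ together with $[\uf,\uf']\subseteq\uf'$ is the right way to deal with it.
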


\begin{proof} This follows easily from the standard description of the
  parabolic subalgebras containing $\pf$ by sets of simple roots.
\end{proof}

\subsection{The real rank of $Z$}
Let $Q$ be $Z$-adapted, with Levi decomposition $Q=LU$ as in
Definition \ref{defi Z-adapted}. From the local structure theorem we
obtain an isomorphism
$$ Q \times_L L/L\cap H\to Q\cdot z_0=P\cdot z_0\,.$$ 
Recall that $\lf_{\mathrm n} \subseteq \hf$.  We decompose
\begin{equation}\label{first decomposition l}
  \lf= \zf(\lf)\oplus[\lf,\lf]=\zf(\lf) \oplus \lf_{\mathrm c} \oplus \lf_{\mathrm n},
\end{equation}
where $\lf_{\mathrm c}$ denotes the sum of all compact simple ideals
in $\lf$.  Note that $D=L/L_{\mathrm n}$ is a Lie group with the Lie
algebra $\df=\zf(\lf)+\lf_{\mathrm c}$, which is compact, and that
$$\lf\cap\hf = \cf \oplus \lf_{\mathrm n}$$
with $\cf=\df\cap\hf$. Let $C=(L\cap H)/L_{\mathrm n}\subseteq D$,
then $L/L\cap H=D/C$, and
\begin{equation}
  U \times D/C\to P\cdot z_0\end{equation}
is an isomorphism. 

Consider the refined version of (\ref{first decomposition l}),
\begin{equation}\label{second decomposition l}
  \lf= \zf(\lf)_{\mathrm n\mathrm p} \oplus \zf(\lf)_{\mathrm c\mathrm p} \oplus \lf_{\mathrm c} \oplus \lf_{\mathrm n}
\end{equation}
in which $\zf(\lf)_{\mathrm n\mathrm p}$ and $\zf(\lf)_{\mathrm
  c\mathrm p}$ denote the non-compact and compact parts of $\zf(\lf)$.
Let $L=K_LA_L(L\cap N)$ be an Iwasawa decomposition of $L$, and let
$G=KAN$ be an Iwasawa decomposition of $G$ which is compatible, that
is, $K\supseteq K_L$ and $A=A_L$. Then $\af=\zf(\lf)_{\mathrm n\mathrm
  p}\oplus (\af\cap\lf_{\mathrm n})$.

Let $\af_h\subset \zf(\lf)_{\mathrm n\mathrm p}$ be the image of $\cf$
under the projection $\lf\to\zf(\lf)_{\mathrm n\mathrm p}$ along
(\ref{second decomposition l}), and let $\af_Z$ be a subspace of
$\zf(\lf)_{\mathrm n\mathrm p}$, complementary to $\af_h$. Then
\begin{equation}\label{third decomposition}
  \af=\af_Z\oplus\af_h\oplus (\af\cap\lf_{\mathrm n})
\end{equation}
The number $\dim\af_Z$ will be called the {\it real rank} of $Z$ in
Section \ref{Rsv}, where we show (under an additional hypothesis) that
it is an invariant of $Z$ (it is independent of the choices of $P$ and
$L$). See Remark \ref{rank remark}.

\subsection{$HP$-factorizations of a semi-simple group}

Let $Z=G/H$ be real spherical. In general $G/P$ admits several
$H$-orbits. Here we investigate the simplest case where there is just
one orbit.

\begin{prop} \label{ONTO} Let $G$ be semi-simple.  Assume that $Z=G/H$
  is real spherical and that $\hf$ contains no non-zero ideal of
  $\gf$.  Then $HP=G$ if and only if $H$ is compact.
\end{prop}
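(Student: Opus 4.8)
The plan is to prove the two implications separately, the easy one being that compactness of $H$ forces $HP = G$. Indeed, if $H$ is compact then $Z = G/H$ carries a $G$-invariant Riemannian metric, and $G$ acts properly on $Z$; alternatively, one can argue directly from the Iwasawa decomposition $G = KAN$ together with compactness: since $H$ is compact it is conjugate into $K$, and then $HP \supseteq$ (a conjugate of $K$)$\cdot P$; using that $Z$ is real spherical, $PH$ is open, and an open subgroup-orbit in a homogeneous space of a group with finitely many orbits plus a compactness/dimension count gives surjectivity. Actually the cleanest route: $HP = G$ is equivalent to $H$ acting transitively on the compact space $G/P$. Since $H$ is compact and its orbit $H\cdot eP$ is open (by real sphericity) in the connected compact manifold $G/P$ (connected because $G$ is, say, taken connected, or one works component-wise), a compact open subset of a connected space is everything, so $H\cdot eP = G/P$, i.e. $HP = G$. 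I expect this direction to be essentially immediate once the ``open $\Rightarrow$ closed $\Rightarrow$ everything on a connected compact space'' observation is in place.

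For the converse — $HP = G$ implies $H$ compact — the plan is to invoke the local structure theorem (Theorem \ref{LST}) together with the refined analysis of $\af$ carried out just above the proposition. Apply Theorem \ref{LST} to get $Q = LU \supseteq P$ with $Q \cdot z_0 = P\cdot z_0$, $L_{\mathrm n} \subseteq H$, and $QH = PH = G$ (using $HP = G$, hence also $PH = G$ after inverting, so $P\cdot z_0 = Z$). Thus $Q \cdot z_0 = Z$, which forces $Q = G$: the fibration $Q\times_L L/L\cap H \to Z$ being onto an open set that is all of $Z$ means the bundle is all of $Z$, and since $U$ is the unipotent radical, $\dim Z = \dim U + \dim L/L\cap H = \dim U + \dim L - \dim(L\cap H)$; but also $\dim Z = \dim G - \dim H = \dim Q - \dim(Q\cap H)$ and $Q\cap H \subseteq L$, giving $\dim U + \dim L - \dim(L\cap H) = \dim U + \dim L - \dim(Q\cap H)$, hence no contradiction yet — the real point is that $Q\cdot z_0 = Z$ means $Q$ is transitive on $Z$ modulo $H$, so $G = QH$; combined with $Q \supseteq P$ and $P$ minimal parabolic this does not yet force $Q = G$, so instead I would argue: since $Q$ is parabolic and $\hf$ contains no non-zero ideal of the semisimple $\gf$, the normal subgroup $L_{\mathrm n} \subseteq H$ must be trivial, i.e. $\lf = \zf(\lf) \oplus \lf_{\mathrm c}$ with $\lf_{\mathrm c}$ compact and $\zf(\lf)$ abelian; so $L/L\cap H = D/C$ with $D$ compact modulo its center.

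Now the heart of the matter: from $HP = G$ deduce that the non-compact part $\af_Z$ (equivalently, $\zf(\lf)_{\mathrm n \mathrm p}$, since $\lf_{\mathrm n}$ is now trivial) vanishes, and that $H$ meets $U$ and $L$ only in compact pieces. The condition $HP = G$ with $P$ minimal parabolic is very restrictive: passing to the decomposition $\gf = \hf + \pf$ and taking $\kappa$-orthogonal complements gives $\hf \cap \theta(\nf) = \{0\}$ up to the center considerations, and a dimension count $\dim\hf \geq \dim\gf - \dim\pf = \dim\bar{\nf} = \dim\nf$. I would use the moment map image $\im\mu = X_0 + \uf \subseteq \af + \nf$ from the proof of Theorem \ref{LST}: if $HP=G$ then $Z_0 = Z$, and the stabilizer computations force $X_0$ to lie in the compact part, i.e. $\nu = 0$, contradicting non-triviality unless $Q = G$ already at the first step — so in fact $G_{\mathrm n} \subseteq H$, but $G$ is semisimple without the hypothesis excluding compact factors, so $G_{\mathrm n} = [G,G]$ up to compact factors, and $\hf$ containing no ideal then forces $\gf$ to have only compact factors, i.e. $G$ compact, hence $H$ compact. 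The main obstacle I anticipate is handling the compact factors of $G$ and the center cleanly: one must separate ``$G_{\mathrm n} \subseteq H$'' (which the hypothesis rules out unless $G_{\mathrm n}$ is trivial) from the compact and abelian parts, and conclude rigorously that a semisimple $G$ with $HP = G$, $H$ containing no ideal, and real spherical, must be compact. I would organize this as: (i) reduce via Theorem \ref{LST} to $Q = G$, i.e. show $HP = G$ and the no-ideal hypothesis prevent any proper $Z$-adapted $Q$; (ii) then $L_{\mathrm n} = \gf_{\mathrm n} \subseteq \hf$ forces $\gf_{\mathrm n} = 0$; (iii) a semisimple Lie algebra with $\gf_{\mathrm n} = 0$ is a sum of compact simple ideals, so $G$ is compact modulo center, and being semisimple it is compact; (iv) then $G/P = G/G$ is a point and $HP = G$ is automatic, while $H$, closed in the compact $G$, is compact.
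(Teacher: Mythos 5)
Your proof of the direction ``$H$ compact $\Rightarrow HP=G$'' is correct and is the same argument as in the paper: the $H$-orbit through $eP$ is open by sphericity and closed by compactness of $H$, hence equals all of the connected flag manifold $G/P$.

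Your proof of the converse is not correct, and the error is already visible in your own step~(i): it is \emph{false} that $HP=G$ together with the no-ideal hypothesis forces the $Z$-adapted parabolic $Q$ to be all of $G$ (equivalently, forces $G_{\mathrm n}\subseteq H$). Take $G=\SL(2,\R)$, $H=\SO(2)$, $P$ the upper-triangular Borel. Here $PH=G$ by the Iwasawa decomposition, $Z$ is real spherical, and $\hf=\mathfrak{so}(2)$ contains no nonzero ideal of the simple algebra $\sl(2,\R)$ — yet $\nf\cap\hf=\{0\}$, so the $Z$-adapted parabolic is $Q=P\neq G$ and certainly $G_{\mathrm n}=G\not\subseteq H$. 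The conclusion you want (``$H$ compact'') is true here, but your chain of reasoning — ``$Q\cdot z_0=Z$ forces $Q=G$'', and separately ``$L_{\mathrm n}\subseteq H$ plus the no-ideal hypothesis forces $L_{\mathrm n}=1$'' — does not establish it: $Q\cdot z_0=Z$ only says $QH=G$, and $L_{\mathrm n}$ is normal in $L$, not in $G$, so its lying inside $H$ is not in tension with $\hf$ containing no ideal of $\gf$.

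The paper's argument goes in an entirely different direction and never uses Theorem~\ref{LST}. From $HP=G$ one gets $\hf+\Ad(g)\pf=\gf$ for all $g$, which is first used to rule out a nonzero unipotent ideal of $\hf$, so $H$ is reductive. One then takes an Iwasawa decomposition $H=K_HA_HN_H$ of $H$ itself, chooses a regular dominant $X\in\af_H$, and lets $\qf$ be the parabolic subalgebra of $\gf$ given by the non-negative eigenspaces of $\ad X$. The identity $\gf=\hf+\qf$ plus a dimension count yields $\nf_H=\uf$ (the nilradical of $\qf$), then $\gf=\hf+\lf$, and finally that the subalgebra $\hf_n\subseteq\hf$ generated by $\nf_H$ and $\bar\nf_H$ is $\lf$-invariant and hence an ideal of $\gf$. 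Only at this point does the no-ideal hypothesis enter, giving $\hf_n=0$ and $H=K_HA_H$ with $A_H$ central; a last short argument ($K=K_HM$ would have to centralize $A_H$) kills $A_H$. You would need to find the analogue of this ``build a parabolic of $\gf$ out of the Iwasawa data of $H$'' step; your moment-map and $\nu=0$ considerations do not substitute for it, since in the LST construction $\nu\neq0$ is guaranteed whenever $G_{\mathrm n}\not\subseteq H$, which is the generic situation.
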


\begin{proof} Assume that $HP=G$. Note that then $HgP=G$ for every
  $g\in G$ and hence
$$\hf+\Ad(g)(\pf)=\gf$$
for every $g\in G$.

\par We first reduce to the case where $H$ is reductive in $G$.
Otherwise there exists a non-zero ideal $\hf_u$ in $\hf$ which acts
unipotently on $\gf$.  By conjugating $P$ if necessary we may assume
that $\hf_u\subseteq \nf$. It then follows from $G=PH$ that
$\Ad(g)(\hf_u)\subseteq \nf$ for all $g\in G$, which is absurd.

\par Assume now that $H$ is reductive and let $H=K_H A_H N_H$ be an
Iwasawa decomposition.  Let $X\in \af_H$ be regular dominant with
respect to $\nf_H$, and let $\qf$ be the parabolic subalgebra of $\gf$
which is spanned by the non-negative eigenspaces of $\ad X$. It
follows that $\qf\cap \hf$ is a minimal parabolic subalgebra of $\hf$,
and that $\nf_H$ is contained in the unipotent part $\uf$ of $\qf$.
As $Q$ contains a conjugate of $P$ we have $\gf=\hf+\qf$ and hence
$\dim (\hf/(\qf\cap \hf))=\dim (\gf/\qf)$, from which we deduce that
$\nf_H=\uf$. From $\nf_H=\uf$ and $\gf=\hf+\qf$ we deduce that
$\gf=\hf+\lf$.  Let $\hf_n$ be the subalgebra of $\hf$ generated by
$\nf_H$ and its opposite $\bar\nf_H$ with respect to the Cartan
involution of $H$ associated with $H=K_H A_H N_H$.  Then $\hf_n$ is
$\lf$-invariant and an ideal in $\hf$.  With $\gf=\hf+\lf$ we now
infer that $\hf_n$ is an ideal in $\gf$, and hence it is zero.  It
follows that $H=K_HA_H$, where $A_H$ is central in $H$.  We may assume
$K_H\subseteq K$ and $A_H\subseteq A$.  Then $G=HP$ implies $K=K_HM$,
and hence $K$ centralizes $A_H$.  This is impossible unless
$A_H=\{1\}$ and then $H$ is compact.

Conversely, if $H$ is compact then the open $H$-orbit on $G/P$ is
closed, and since $G/P$ is connected it follows that $HP=G$.
\end{proof}

\section{Real spherical varieties}\label{Rsv}

All complex varieties $Z_\C$ in this section will be defined over
$\R$.  Typically we denote by $Z$ the set of real points of $Z_\C$. If
$Z$ is Zariski-dense in $Z_\C$, then we call $Z$ a {\it real
  (algebraic) variety}.

\par We say that a subset $U\subset Z$ is (quasi)affine if there
exists a (quasi)affine subset $U_\C\subset Z_\C$ such that $U=U_\C\cap
Z$.

\begin{rmk} Even if $Z_\C$ is irreducible it might happen that $Z$ has
  several connected components with respect to the Euclidean
  topology. However, by Whitney's theorem, the number of connected
  components is always finite.  Take for example $Z=\GL(n,\R)$ and
  $Z_\C=\GL(n,\C)$. Here $Z$ breaks into two connected components
  $\GL(n,\R)_+$ and $\GL(n,\R)_-$ characterized by the sign of the
  determinant; certainly it would be meaningful to call $\GL(n,\R)_+$
  a real algebraic variety as well.
  \par Let $Z_1\amalg \ldots \amalg Z_n$ be the decomposition of $Z$
  into connected components (with respect to the Euclidean topology).
  A more general notion of {\it real variety} would be to allow
  arbitrary unions of those $Z_j$ which are Zariski dense in
  $Z_\C$. In fact, all the statements derived in this section for real
  varieties are valid in this more general setup.
\end{rmk}

In this section we let $G$ be a real algebraic reductive group and
$G_\C\supseteq G$ its complexification. Furthermore, $P$ is a minimal
parabolic subgroup of $G$ and $P=MAN$ a Langlands decomposition of it.
\par By a {\it real $G$-variety $Z$} we understand a real variety $Z$
endowed with a real algebraic $G$-action. A real $G$-variety will be
called {\it linearizable} provided there is a finite dimensional real
$G$-module $V$ such that $Z$ is realized as real subvariety of
$\mathbb{P}(V)$.

An algebraic real reductive group $G$ is called {\it elementary} if
$G\cong M \times A$ with $M$ compact and $A=(\R^+)^l$.  This is
equivalent to $G=P$. A real $G$-variety $Z$ will then be called
\emph{elementary} if $G/J$ is elementary where $J$ is the kernel of
the action on $Z$.

\begin{definition}\label{defi spherical variety} 
  A linearizable real $G$-variety $Z$ will be called {\it real
    spherical} provided that:
  \begin{itemize}
  \item $Z_\C$ is irreducible,
  \item $Z$ admits an open $P$-orbit.
  \end{itemize}
\end{definition}

\begin{rmk}\label{rmk elementary} (a)  In the definition of a (complex)
  spherical variety one requests in particular that the variety is
  normal.  We now explain how this is related to our notion of real
  spherical.
  \par
  Assume that $Z_\C$ is normal. Then it follows from a theorem of
  Sumihiro (\cite{KKLV} p.~64) that every every point $z\in Z_\C$ has
  a $G_\C$-invariant open neighborhood $U$ which can be equivariantly
  embedded into $\mathbb{P}(V_\C)$ where $V_\C$ is a finite
  dimensional representation of $G_\C$. It follows that if $z\in Z$
  then $U_0:=(U\cap\overline{U})\cap Z$ is a linearizable open
  neighborhood of $z$. Observe that there is always a normalization
  map $\nu:\tilde Z\to Z$ where $\tilde Z$ is a normal $G$-variety and
  $\nu$ is proper, finite to one, and invertible over an open dense
  subset of $Z$.
  \par (b) If $Z$ is a real spherical variety, then the number of open
  $P$-orbits is finite: As $Z_\C$ is irreducible, there is exactly one
  open $P_\C$-orbit on $Z_\C$ and the real points of this open
  $P_\C$-orbit decomposes into finitely many $P$-orbits. We conclude
  in particular that there are only finitely many open $G$-orbits in
  $Z$. Let ${\mathcal O}\simeq G/H$ be one of them.  Then $G/H$ is a
  real spherical algebraic homogeneous space which we considered
  before.
  \par (c) Let $Z$ be an elementary real spherical variety. If $G=A$,
  then $Z$ consists of the real points of a toric variety defined over
  $\R$.
  \par (d) Let $G=M\times A$ be an elementary algebraic real reductive
  group and $Z=G/H$ a homogeneous real spherical $G$-variety.  Since
  there are no algebraic homomorphisms between a split torus and a
  compact group, the group $H$ splits as $H=M_0\times A_0$ with
  $M_0\subseteq M$ and $A_0\subseteq A$. Thus $Z=M/M_0\times A/A_0$.
\end{rmk}

\subsection{Some general facts about real $G$-varieties}

Let $Z$ be an irreducible real variety. We denote by $\C[Z]$,
resp.~$\C(Z)$, the ring of {\it regular}, resp.~{\it rational
  functions} on $Z$, that is $\C[Z]$ consists of the restrictions of
the regular functions on $Z_\C$ to $Z$ and likewise for $\C(Z)$.

\par As $Z$ is Zariski-dense we observe that the restriction mapping
$\operatorname{Res}: \C(Z_\C)\to \C(Z)$ is bijective. Next we note
that both $\C(Z)$ and $\C[Z]$ are invariant under complex conjugation
$f\mapsto \oline f$. In particular with $f\in \C[Z]$, resp.~$\C(Z)$,
we also have that $\operatorname{Re} f$ and $\operatorname{Im} f$
belong to $\C[Z]$, resp.~$\C(Z)$.

\par If a compact real algebraic group $M$ acts on $Z$, then the
$M$-average
$$f\mapsto f^M;   \ \ f^M(z):= \int_M f(m\cdot z) \ dm \qquad (z\in Z)$$
preserves $\C[Z]$. This follows from the fact that the $G$-action on
$\C[Z]$ is locally finite. Put together we conclude
\begin{equation}\label{average}
  f\in \C[Z]\Rightarrow  (|f|^2)^M \in \C[Z]^M\text{ with } f\ne0\Rightarrow  (|f|^2)^M \ne0.\end{equation}  
   
\par Let us denote by $\hat P$ the set of real algebraic characters
$\chi: P \to \R^\times$ such that $MN \subseteq \ker \chi$. Note that
the subgroup $MN$ of $P$, and hence $\hat P$, is independent of the
choice of a Langlands decomposition of $P$.  However, when that has
been chosen, there is a natural identification of $\hat P$ with a
lattice $\Lambda\subseteq \af^*$.

\par For the rest of this subsection we let $Z$ be a real
$G$-variety. We denote by $\C(Z)^{(P)}$ the set of $P$-semi-invariant
functions, i.e.~the rational functions $f\in \C(Z)\setminus\{0\}$ for
which there is a $\chi\in \hat P$ such that $f(p^{-1}z) =\chi(p) f(z)$
for all $p\in P$, $z\in Z$ for which both sides are defined.  We
denote by $\C(Z)^P$ the set of $P$-invariants in $\C(Z)$.  Likewise we
define $\C[Z]^{(P)}$ and $\C[Z]^P$.  Further we denote by $\R(Z)$ and
$\R[Z]$ the real valued functions in $\C(Z)$ and $\C[Z]$.

\begin{lemma} \label{quotient} Let $Z$ be a quasi-affine real
  $G$-variety. Then for all non-zero $f\in \R(Z)^P$ there exists $f_1,
  f_2 \in \R[Z]^{(P)}$ such that $f=\frac{f_1}{f_2}$.
\end{lemma}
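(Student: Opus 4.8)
The statement to prove is Lemma~\ref{quotient}: on a quasi-affine real $G$-variety $Z$, every non-zero $P$-invariant rational function $f\in\R(Z)^P$ can be written as a quotient $f_1/f_2$ with $f_1,f_2\in\R[Z]^{(P)}$ $P$-semi-invariant \emph{regular} functions. The natural approach is to take an arbitrary expression $f=g_1/g_2$ with $g_1,g_2\in\R[Z]$ (which exists since $Z$ is quasi-affine, so $\R(Z)$ is the fraction field of $\R[Z]$) and then to symmetrize over $P$ to upgrade the numerator and denominator to semi-invariants, while keeping track of the fact that symmetrizing a multiplicative object requires passing to the ``denominator ideal'' of $f$, which is $P$-stable precisely because $f$ is $P$-invariant.

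First I would consider the fractional ideal $I=\{h\in\R[Z]: hf\in\R[Z]\}$ of ``denominators of $f$''. Since $f$ is $P$-invariant and $\R[Z]$ is $P$-stable, $I$ is a $P$-stable ideal of $\R[Z]$, and $fI\subseteq\R[Z]$ is also $P$-stable. Because the $G$-action (hence $P$-action) on $\R[Z]$ is locally finite, $I$ contains a nonzero finite-dimensional $P$-submodule $V\subseteq I$. Now $P=MAN$, and semi-invariant vectors for $P$ in a finite-dimensional $P$-module are exactly the $MN$-invariant $A$-weight vectors; the subtlety is that $\R[Z]$ may have no nonzero such vector inside $V$ a priori. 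The standard device here is to replace $V$ by a suitable exterior power or determinant: let $d=\dim V$ and consider $\det V=\bigwedge^d V$, a one-dimensional $P$-module, on which $P$ acts by a character $\chi\in\hat P$ (since $MN$ is unipotent-by-compact, hence acts trivially on the one-dimensional determinant — more precisely $N$ acts trivially as it is unipotent, $M$ acts trivially as it is compact acting on a $1$-dimensional real space, so only $A$ acts, by a character in $\hat P$). Choosing a basis $h_1,\dots,h_d$ of $V$, the product-type element one wants is obtained by forming a generalized Wronskian / an appropriate polynomial combination realizing $\det V$ inside a tensor power of $\R[Z]$; concretely, the element $\sum_{\sigma\in S_d}\mathrm{sgn}(\sigma)\,h_{\sigma(1)}\cdot(\text{translates})$ — but since $\R[Z]$ is a commutative ring, the cleanest route is: the image of $\det V$ under the multiplication map $\bigwedge^d V\to \R[Z]$ need not be nonzero, so instead work with a flag.

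So the cleaner key step is: inside the finite-dimensional $P$-module $V\subseteq I$, pick a one-dimensional $P$-stable subspace — which exists because $P$ is solvable-by-compact, indeed $A$ and $M$ are both reductive and $N$ is unipotent, so by the Lie–Kolchin theorem applied to the solvable radical and complete reducibility for $M$, any nonzero finite-dimensional $P$-module over $\R$ contains a nonzero $MN$-invariant $A$-weight vector, possibly after complexifying and taking real/imaginary parts as the excerpt notes $\R[Z]$ is closed under $\re,\im$. Call this vector $f_2\in V\subseteq I$; it is a nonzero element of $\R[Z]^{(P)}$ by construction. Then $f_1:=ff_2\in\R[Z]$ by definition of $I$, and since both $f$ and $f_2$ transform under $P$ by characters (trivially, resp.\ by $\chi$), the product $f_1$ transforms by $\chi$, i.e.\ $f_1\in\R[Z]^{(P)}$. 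Finally $f=f_1/f_2$, completing the proof.

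\textbf{Main obstacle.} The one genuinely delicate point is extracting a nonzero $P$-semi-invariant \emph{inside} a given finite-dimensional $P$-submodule of $\R[Z]$ over the reals: the Lie–Kolchin argument naturally produces a semi-invariant line after complexification, and one must check that taking real and imaginary parts of such a complex semi-invariant yields a nonzero real semi-invariant in $\R[Z]^{(P)}$ (using that the relevant character $\chi$ is real-valued on $A$, since $\hat P$ consists of $\R^\times$-valued characters, so the $A$-weight is real and $\re,\im$ of a weight vector are again weight vectors of the same weight). The locally-finite property of the $G$-action, invoked in the paragraph preceding the lemma, is what guarantees the finite-dimensional submodule $V\subseteq I$ exists in the first place, so that part is already available.
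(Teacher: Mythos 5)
Your overall strategy (the denominator ideal $I=\{h: hf\in\C[Z]\}$, its $P$-stability, local finiteness, then multiplying $f$ by a semi-invariant element of $I$) is exactly the paper's, but there is a genuine gap at the step you yourself flag as the delicate one. You claim that any nonzero finite-dimensional $P$-module contains a nonzero $MN$-invariant $A$-weight vector, ``by Lie--Kolchin applied to the solvable radical and complete reducibility for $M$.'' This is false. Lie--Kolchin (after complexifying) produces a common eigenvector $h$ for the connected solvable group $AN$; the span of the $M$-translates of $h$ is then a $P$-submodule on which $AN$ acts by a fixed character, but $M$ may act on it by a nontrivial irreducible representation (e.g.\ $M=\mathrm{SO}(3)$ acting by its standard representation), in which case there is no $M$-fixed vector at all, hence no one-dimensional $P$-stable line and no element of $\R[Z]^{(P)}$ in sight. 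Complete reducibility for the compact group $M$ decomposes the module into irreducibles but gives no reason any of them is trivial, and taking real/imaginary parts only addresses the $A$-weight, not $M$-invariance. Your earlier determinant/exterior-power idea, which you abandon, does not repair this either.

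The paper closes this gap with a positivity trick that your argument is missing: take the $AN$-eigenvector $0\ne h\in I$ given by Lie--Kolchin and set $f_2=(|h|^2)^M$, the $M$-average of $|h|^2=h\bar h$. Since $|h|^2\ge 0$ and is not identically zero, its $M$-average is nonzero (this is the displayed implication (\ref{average}) in the text); it lies in $I$ because $I$ is an $M$-stable ideal and $\bar h\in\C[Z]$; it is $AN$-semi-invariant with character $|\chi|^2$ because $M$ normalizes $N$ and centralizes $A$; and it is $M$-invariant by construction. Hence $f_2\in I\cap\R[Z]^{(P)}$ and $f_1=f f_2\in\R[Z]^{(P)}$ as you intended. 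Without replacing your ``pick a $P$-stable line'' step by this averaging of $|h|^2$, the proof does not go through.
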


\begin{proof} Let $f\in \R(Z)^{P}$. As $Z$ is quasi-affine, we find
  regular functions $h_1, h_2\in \C[Z]$, $h_2\ne0$ such that
  $f=\frac{h_1}{h_2}$. Consider the ideal
$$I:=\{ h\in \C[Z]\mid h f \in \C[Z]\}\, .$$
Note that:
\begin{itemize}
\item $I\neq \{0\}$ as $h_2\in I$,
\item $I=\oline I$ as $f$ is real,
\item $I$ is $P$-invariant as $f$ is $P$-fixed.
\end{itemize}
The action of $P$ on $\C[Z]$ is algebraic, hence locally finite and
thus we find an element $0\ne h\in I$ which is an eigenvector for the
solvable group $AN$.  We use (\ref{average}) to obtain with
$f_2=(|h|^2)^M$ a non-zero element of $I\cap \R[Z]^{(P)}$. Now we put
$f_1=f_2f\in\R[Z]^{(P)}$.
\end{proof}

For $\chi \in \hat P=\Lambda$ we let
$$\C[Z]_\chi:=\{ f\in \C[Z] \mid (\forall p \in P, z\in Z)\ f(p^{-1}z)=\chi(p) f(z)\}\, $$ 
and likewise define $\C(Z)_\chi$. We define a sub-lattice of $\Lambda$
by
$$\Lambda_Z:=\{ \chi \in \hat P\mid \C(Z)_\chi\neq \{0\}\}\, .$$
With that we declare the {\it real rank} of $Z$ by
\begin{equation}\label{rank}
  \rk_\R (Z):= \dim_\Q  (\Lambda_Z\otimes_\Z \Q)\, .
\end{equation}
It is easily seen that $\rk_\R(Z)$ is independent of the choice of
minimal parabolic subgroup $P$.

\begin{rmk}\label{rank remark}
  Let $Z=G/H$ be homogeneous. Then $\rk_\R(Z)=\dim \af_Z$ where
  $\af_Z$ is defined by (\ref{third decomposition}). In fact, as a
  $Q$-variety, an open subset of $Z$ is isomorphic to $U\times L/L\cap
  H$. Thus $\R(Z)^{(P)}=\R(L/L\cap H)^{(L\cap P)}$.  Since $H$
  contains $L_{\mathrm n}$ the variety $L/L\cap H$ is elementary.  By
  Remark \ref{rmk elementary}(d), we have $\R(L/L\cap H)^{(L\cap
    P)}=\R(A/A_0)^{(A)}$ which implies the claim, as
  $A/A_0\simeq\af_Z$.
\end{rmk}

\begin{lemma}\label{affine} Let $Z$ be a linearizable irreducible real $G$-variety and 
  $Y\subseteq X$ a Zariski-closed $G$-invariant subvariety. Then there
  exists a $P$-stable affine open subset $Z_0\subseteq Z$ which meets
  $Y$ and such that the restriction mapping:
$$\R[Z_0]^{(P)} \to \R[Z_0\cap Y]^{(P)}$$
is onto.
\end{lemma}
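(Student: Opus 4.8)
The plan is to realise $Z_0$ as the complement of a hypersurface section inside the projective closure of $Z$, cut out by a carefully chosen $P$-semi-invariant form, and to read off the surjectivity statement from the homogeneous coordinate ring, where the full reductive group acts. Since $Z$ is linearizable, fix an embedding $Z_\C\subseteq\mathbb P(V_\C)$ with $V$ a real $G$-module and $V_\C$ a $G_\C$-module defined over $\R$. Let $\overline{Z_\C}$ be the Zariski closure of $Z_\C$, a $G_\C$-stable projective variety defined over $\R$; let $\partial:=\overline{Z_\C}\setminus Z_\C$ (Zariski-closed, $G_\C$-stable, defined over $\R$) and let $\overline{Y_\C}$ be the closure of $Y_\C$. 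Passing to affine cones in $V_\C$, write $R=\C[\widehat{\overline{Z_\C}}]=\bigoplus_{n\ge 0}R_n$ for the homogeneous coordinate ring: a graded, locally finite $G_\C$-module algebra, defined over $\R$, with $\mathrm{Proj}\,R=\overline{Z_\C}$. Let $I\subseteq R$ and $I_Y\subseteq R$ be the homogeneous ideals of the cones over $\partial$ and over $\overline{Y_\C}$ respectively. Since $Y_\C$ is nonempty and contained in $Z_\C$, one has $\overline{Y_\C}\not\subseteq\partial$, hence $I\not\subseteq I_Y$; fix $d\ge 1$ with $I_d\not\subseteq (I_Y)_d$.

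Next I construct $\Phi\in R_{2d}$, defined over $\R$, that is $P$-semi-invariant, vanishes on the cone over $\partial$, and is nonzero at some real point lying over $Y$. The set of real points of $I_d$ is a completely reducible real $G$-module, being a real form of the algebraic $G_\C$-module $I_d$; decompose $I_d\cap (I_Y)_d$ into irreducible real $G$-submodules and extend, by complete reducibility, to a decomposition of $I_d$. As $I_d\ne I_d\cap (I_Y)_d$, some irreducible summand $W$ is not among those of $I_d\cap (I_Y)_d$, so $W\cap (I_Y)_d=\{0\}$. Since $AN$ is a connected split solvable group, $W$ contains a nonzero $AN$-eigenvector $\xi$; write $\chi\colon AN\to\R^\times$ for its character, which is trivial on the unipotent group $N$ and positive on $A$. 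Put
$$\Phi:=\int_M m\cdot(\xi^2)\,dm\ \in\ R_{2d}.$$
Then $\Phi$ is $M$-invariant; it is $N$-fixed (because $M$ normalizes $N$ and $\xi^2$ is $N$-fixed) and an $A$-eigenvector with character $\chi^2$ (because $M$ centralizes $A$), hence $P$-semi-invariant with character in $\hat P$. Moreover $\xi$ vanishes on the cone over $\partial$, so all $m\cdot(\xi^2)$, and hence $\Phi$, vanish there too; and $\xi\notin (I_Y)_d$, so — using that $Y=Y_\C(\R)$ is Zariski dense in $Y_\C$, which is dense in $\overline{Y_\C}$ — there is $y_0\in Y$ with a representative $\widehat y_0\in V$ in the cone over $\overline{Y_\C}$ at which $\xi$ is nonzero, whence $\Phi(\widehat y_0)=\int_M\xi(m^{-1}\widehat y_0)^2\,dm>0$, the integrand being real, non-negative, and positive at $m=e$. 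This averaging device, which promotes the readily available $AN$-semi-invariance to genuine $P$-semi-invariance, is the one used for \eqref{average}.

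Set $U_\C:=D_+(\Phi)=\overline{Z_\C}\cap\{\Phi\ne 0\}$ and $Z_0:=U_\C\cap Z$. Being the distinguished open of $\mathrm{Proj}\,R$ attached to a homogeneous element of positive degree, $U_\C$ is affine; it is $P$-stable since $\R\Phi$ is $P$-stable, and it is contained in $Z_\C$ since $\Phi$ vanishes on $\partial$. Hence $Z_0$ is a $P$-stable affine open subset of $Z$ containing the real point $y_0\in Y$. For the surjectivity, identify $\C[Z_0]=(R_\Phi)_0$ and $\C[Z_0\cap Y]=((R/I_Y)_{\bar\Phi})_0$, with restriction induced by $R\twoheadrightarrow R/I_Y$; under these identifications a $P$-semi-invariant is a quotient $f/\Phi^k$ with $f\in R_{2dk}$ itself $P$-semi-invariant, since $\Phi$ is. Given $\bar g\in\R[Z_0\cap Y]^{(P)}$, write $\bar g=\bar f/\bar\Phi^{\,k}$ with $\bar f\in(R/I_Y)_{2dk}$ a $P$-semi-invariant. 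The surjection $R_{2dk}\twoheadrightarrow(R/I_Y)_{2dk}$ of finite-dimensional complex representations of the reductive group $G_\C$ admits a $G_\C$-equivariant splitting, whose value on $\bar f$ is a $P$-semi-invariant $f\in R_{2dk}$ lifting $\bar f$; then $f/\Phi^k\in\C[Z_0]^{(P)}$ restricts to $\bar g$, and replacing it by its real part, which has the same (real-valued) character and still restricts to $\operatorname{Re}\bar g=\bar g$, yields the required element of $\R[Z_0]^{(P)}$.

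The main obstacle, and the reason for the detour through the cone, is that $Z_0$ itself carries only a $P$-action, and $P$ is not reductive, so the automatic surjection $\R[Z_0]\to\R[Z_0\cap Y]$ cannot be split $P$-equivariantly; passing to the homogeneous coordinate ring $R$ restores the full reductive group $G_\C$ on each graded piece. The remaining points — that $Z_0$ is affine, $P$-stable, contained in $Z$, and meets $Y$ in a real point, and the identification of the function rings — are routine bookkeeping with Zariski closures and real structures.
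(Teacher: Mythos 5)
Your proof is correct, and it takes essentially the same route the paper follows: the paper cites Brion's Prop.\ 1.1 for the argument with $AN$ in place of a Borel subgroup and invokes \eqref{average} to restore $M$-invariance, and you carry out that plan in detail --- passing to the projective closure and its homogeneous coordinate ring, constructing the $P$-semi-invariant $\Phi=\int_M m\cdot(\xi^2)\,dm$ from an $AN$-eigenvector $\xi$ vanishing on the boundary cone but not on the cone over $Y$, and reading off surjectivity of the restriction on semi-invariants from a $G$-equivariant splitting of the finite-dimensional graded pieces. The only points worth flagging are cosmetic: the conclusion $W\cap(I_Y)_d=\{0\}$ should be justified by choosing a $G$-stable complement $W$ of $I_d\cap(I_Y)_d$ in $I_d$ rather than merely a summand ``not among those of'' the smaller module (your phrasing is informal but the intended argument is exactly this), and the density of $Y$ in $Y_\C$, which you use to locate the real point $y_0$, is implicit in the paper's convention that a real variety has Zariski-dense real points.
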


\begin{proof} If $G$ is complex, then this is the real points version
  of \cite{Brion}, Prop. 1.1. Further with $P$ replaced by $AN$ one
  can literally copy the proof of \cite{Brion}. Finally the additional
  $M$-invariance when moving from $AN$ to $P$ is obtained from
  (\ref{average}).
\end{proof}

\par Denote by $\Lambda^+\subseteq \Lambda$ the semigroup of elements
dominant with respect to $P$.  For all $\lambda \in \Lambda^+$ we set
$$m(\lambda):= \dim_\C  \C[Z]_\lambda\, .$$
If we identify $\Lambda^+$ with a subset of the irreducible finite
dimensional representations of $G$, then $m(\lambda)$ is the
multiplicity of the irreducible representation $\lambda$ occurring in
the locally finite $G$-module $\C[Z]$. The following is a real
analogue of the Vinberg-Kimel'feld theorem \cite{VinKim}:

\begin{prop}\label{m=1} 
  Let $Z$ be a quasi-affine irreducible $G$-variety. Then the
  following assertions are equivalent:
  \begin{enumerate}
  \item $Z$ is real spherical.
  \item $m(\lambda)\leq 1 $ for all $\lambda \in \Lambda^+$.
  \end{enumerate}
\end{prop}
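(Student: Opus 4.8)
The plan is to prove the two implications separately, following the classical Vinberg--Kimel'feld argument but being careful about the passage from complex to real coefficients, for which the averaging identity \eqref{average} and Lemma \ref{quotient} are the essential tools.

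\medskip
\emph{(1) $\Rightarrow$ (2).} Suppose $Z$ is real spherical, so $P$ has an open orbit $\Oc$ on $Z$. Fix $\lambda\in\Lambda^+$ and suppose for contradiction that $\dim_\C\C[Z]_\lambda\ge 2$; pick two linearly independent $f_1,f_2\in\C[Z]_\lambda$. Then the quotient $f_1/f_2\in\C(Z)$ is $P$-invariant and non-constant. Taking real and imaginary parts (both of which lie in $\C(Z)$, as noted in the text) we get a non-constant $f\in\R(Z)^P$. But a $P$-invariant rational function is constant on the open $P$-orbit $\Oc$, hence constant on its Zariski closure, which is all of $Z_\C$ since $Z_\C$ is irreducible; this contradicts non-constancy. (For this step one does not even need quasi-affineness.)

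\medskip
\emph{(2) $\Rightarrow$ (1).} Assume $m(\lambda)\le 1$ for all $\lambda\in\Lambda^+$; we must produce an open $P$-orbit. The strategy is to show that $\R(Z)^P=\R$, i.e.\ there are no non-constant $P$-invariant rational functions, and then deduce sphericity. Given $f\in\R(Z)^P$, apply Lemma \ref{quotient} (using that $Z$ is quasi-affine) to write $f=f_1/f_2$ with $f_1,f_2\in\R[Z]^{(P)}$, semi-invariant of the same weight $\chi$, say. Replacing $\chi$ by a suitable power we may move $\chi$ into $\Lambda^+$: more precisely, writing $\chi=\chi^+-\chi^-$ with $\chi^\pm\in\Lambda^+$, the functions $f_i$ regarded via multiplication by an auxiliary highest-weight regular function (coming from the linearization $Z\hookrightarrow\mathbb P(V)$) can be arranged to have dominant weight, so that $f_1,f_2\in\C[Z]_\lambda$ for a common $\lambda\in\Lambda^+$. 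By hypothesis $\dim_\C\C[Z]_\lambda\le 1$, so $f_1$ and $f_2$ are proportional and $f$ is constant. Hence $\R(Z)^P=\R$, and therefore also $\C(Z)^P=\C$ since complex conjugation preserves $\C(Z)^P$ and real/imaginary parts of a $P$-invariant are $P$-invariant.

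\medskip
It remains to pass from ``no non-constant $P$-invariant rational functions'' to ``$P$ has an open orbit.'' Here I would argue on the generic level: let $\Oc\subseteq Z$ be a $P$-orbit of maximal dimension; then $P\cdot\Oc$-generic fibers of a rational quotient of $Z$ by $P$ have dimension $\dim Z-\dim\Oc$, and the function field of the generic fiber injects into $\C(Z)^P=\C$, forcing $\dim\Oc=\dim Z$, i.e.\ $\Oc$ is open. Concretely: if no $P$-orbit were open then a generic point $z$ has $\dim P\cdot z<\dim Z$, and by Rosenlicht's theorem on rational quotients there would be a non-constant $P$-invariant rational function separating generic orbits, contradicting $\C(Z)^P=\C$. (One must check Rosenlicht's theorem applies over $\R$; since $\C(Z)=\C(Z_\C)$ via restriction and $P_\C$-invariants are detected after base change, this reduces to the complex statement.)

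\medskip
The main obstacle I expect is exactly the bookkeeping in (2) $\Rightarrow$ (1): controlling the weights of the $f_i$ so that they land in $\Lambda^+$ where the multiplicity hypothesis bites. In the complex case one multiplies by a fixed highest-weight vector arising from an ample linearization; here one needs the real linearization $Z\hookrightarrow\mathbb P(V)$ together with the observation (via \eqref{average}) that $(|h|^2)^M$ converts an $AN$-eigenvector into a genuine $P$-semi-invariant, so that the weight lattice one works with is honestly $\Lambda=\hat P$ and not merely the $AN$-character lattice. Once the weights are dominant, the rest is formal.
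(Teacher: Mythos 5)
Your proof is correct and follows essentially the same route as the paper: density of the open $P$-orbit for $(1)\Rightarrow(2)$, and Rosenlicht's theorem together with Lemma \ref{quotient} for $(2)\Rightarrow(1)$. The ``bookkeeping obstacle'' you anticipate is in fact vacuous: a nonzero $P$-semi-invariant \emph{regular} function is automatically an $N$-fixed $\af$-weight vector in the locally finite $G$-module $\C[Z]$, hence a highest weight vector of the submodule it generates, so its character already lies in $\Lambda^+$ and no auxiliary multiplication is needed.
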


\begin{proof} ``$(1)\Rightarrow (2)$'': Let $z\in Z$ such that $P\cdot
  z$ is open in $Z$. Then two $P$-semi-invariant functions $f_1$ and
  $f_2$ with respect to the same character $\lambda\in \hat P$ satisfy
  $f_1 |_{P\cdot z} = c f_2|_{P\cdot z}$ for some constant $c\in
  \C$. As $Z_\C$ is irreducible we conclude that $f_1 = c f_2$.
  \par ``$(2)\Rightarrow (1)$'': We recall that there is an open
  $P$-orbit on $Z$ if and only if $\C(Z)^P=\C\1 $. This follows from
  Rosenlicht's theorem, \cite{Spr} p.~23, applied to $Z_\C$.  Let now
  $f\in \C(Z)^P$. According to Lemma \ref{quotient} there exists $f_1,
  f_2 \in \C[Z]^{(P)}$ such that $f=\frac{f_1}{f_2}$.  Clearly $f_1$
  and $f_2$ correspond to the same character $\lambda\in \hat P$. As
  $m(\lambda)\leq 1$, we conclude that $f_1$ is a multiple of $f_2$.
\end{proof}

\begin{cor}\label{cor1} Let $Z$ be a real spherical variety and $Y\subseteq Z$ a closed $G$-invariant irreducible 
  subvariety. Then $Y$ is real spherical.
\end{cor}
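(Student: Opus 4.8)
The plan is to deduce Corollary~\ref{cor1} from Proposition~\ref{m=1} by reducing to the quasi-affine case, since $Y$ need not be quasi-affine even though $Z$ is merely assumed linearizable. First I would fix $z\in Y$ and use Lemma~\ref{affine} (applied with the ambient variety $Z$ and the closed $G$-invariant subvariety $Y$) to produce a $P$-stable affine open subset $Z_0\subseteq Z$ that meets $Y$, such that the restriction map $\R[Z_0]^{(P)}\to\R[Z_0\cap Y]^{(P)}$ is onto. Replacing $Z$ by $Z_0$ and $Y$ by $Y_0:=Z_0\cap Y$, I note that $Z_0$ is a quasi-affine irreducible real $G$-variety (affine open in $Z$), and $Y_0$ is a closed $G$-invariant subvariety of $Z_0$; moreover $Y_0$ is nonempty and hence Zariski-dense in some irreducible component of $Y_\C\cap (Z_0)_\C$ — here one should be slightly careful and either pass to an irreducible component of $Y$ meeting $z$, or simply note that the statement only requires $Y$ irreducible so $Y_0$ is irreducible and quasi-affine.

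Next I would verify that $Z_0$ is still real spherical: since $Z$ has an open $P$-orbit and $Z_0$ is $P$-stable and open in $Z$ (and Zariski-dense, being open in the irreducible $Z$), the open $P$-orbit of $Z$ meets $Z_0$, so $Z_0$ admits an open $P$-orbit; and $(Z_0)_\C$ is an open subvariety of the irreducible $Z_\C$, hence irreducible. Thus $Z_0$ is a quasi-affine irreducible real spherical $G$-variety, and by Proposition~\ref{m=1} we have $m(\lambda)=\dim_\C\C[Z_0]_\lambda\le 1$ for every $\lambda\in\Lambda^+$.

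Now the key step: transfer the multiplicity bound to $Y_0$. Given $\lambda\in\Lambda^+$ and two functions $f_1,f_2\in\R[Y_0]_\lambda$ (it suffices to treat the real case, then complexify, since $\C[Y_0]_\lambda=\R[Y_0]_\lambda\otimes_\R\C$), by the surjectivity in Lemma~\ref{affine} I can lift them to $\tilde f_1,\tilde f_2\in\R[Z_0]^{(P)}$ with the same character $\lambda$. If $f_2\ne 0$ then $\tilde f_2\ne 0$, and since $m(\lambda)\le 1$ for $Z_0$ we get $\tilde f_1=c\,\tilde f_2$ for some scalar $c$; restricting back to $Y_0$ yields $f_1=c f_2$. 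Hence $\dim_\C\C[Y_0]_\lambda\le 1$ for all $\lambda\in\Lambda^+$. Since $Y_0$ is quasi-affine irreducible, Proposition~\ref{m=1} (direction $(2)\Rightarrow(1)$) applies and shows $Y_0$ is real spherical, i.e.\ $Y_0$ has an open $P$-orbit. Finally, $Y_0$ is open in $Y$ (it is $Y\cap Z_0$ with $Z_0$ open in $Z$) and Zariski-dense in $Y_\C$, so an open $P$-orbit in $Y_0$ is an open $P$-orbit in $Y$; combined with irreducibility of $Y_\C$ this gives that $Y$ is real spherical.

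The main obstacle I anticipate is the bookkeeping around passing to the affine open subset: one must check simultaneously that $Z_0$ remains real spherical (open $P$-orbit survives, $(Z_0)_\C$ irreducible), that $Y_0=Y\cap Z_0$ is nonempty, irreducible, and quasi-affine so that Proposition~\ref{m=1} is applicable to it, and that an open $P$-orbit on the Zariski-dense open subset $Y_0$ propagates to all of $Y$. None of these is deep, but they are exactly the points where the weaker hypothesis ``linearizable'' (rather than ``quasi-affine'') forces the use of Lemma~\ref{affine} rather than a direct restriction argument; the multiplicity transfer itself is then a one-line consequence of the surjectivity statement in that lemma together with the equivalence in Proposition~\ref{m=1}.
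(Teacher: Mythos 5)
Your argument takes a genuinely different route from the paper's. The paper reduces the general linearizable case to the quasi-affine case by passing to the affine cone: it sets $\hat Z\subseteq V\setminus\{0\}$ (the preimage of $Z\subseteq\mathbb{P}(V)$), observes that $\hat Z$ is quasi-affine, and replaces $G$ by the enlarged reductive group $G_1=G\times\R^\times$ so that $Z$ is real spherical iff $\hat Z$ is; then in the quasi-affine setting it invokes the surjectivity of $\C[Z]\to\C[Y]$ and Proposition~\ref{m=1}. You instead reduce via Lemma~\ref{affine}, passing to a $P$-stable affine open piece $Z_0$ and $Y_0=Z_0\cap Y$, and transfer the multiplicity bound through the surjection $\R[Z_0]^{(P)}\to\R[Y_0]^{(P)}$. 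Both reductions are legitimate, and your use of Lemma~\ref{affine}'s surjectivity on semi-invariants is genuinely needed: the raw surjectivity of $\C[Z_0]\to\C[Y_0]$ coming from $Y_0$ being closed in the affine $Z_0$ would not by itself give surjectivity on $P$-semi-invariants (the unipotent factor $N$ of $P$ is not reductive, so one cannot split off semi-invariants from an arbitrary preimage). The affine-cone reduction is more elementary machinery; your route uses the heavier Lemma~\ref{affine} but localizes nicely near $Y$ in a way that parallels the construction of the local structure theorem itself.

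There is, however, one genuine gap you should close. Lemma~\ref{affine} produces a $Z_0$ that is only $P$-stable, not $G$-stable; so $Z_0$ and $Y_0$ are quasi-affine $P$-varieties, not $G$-varieties, whereas Proposition~\ref{m=1} (and Lemma~\ref{quotient}, which it relies on for $(2)\Rightarrow(1)$) is stated for quasi-affine $G$-varieties. Your sentence ``$Z_0$ is a quasi-affine irreducible real spherical $G$-variety'' is therefore not literally correct, and as written you are applying Proposition~\ref{m=1} outside its stated hypotheses. This is repairable: an inspection shows that the proofs of both Lemma~\ref{quotient} and Proposition~\ref{m=1} only ever use the $P=MAN$-action on $\C[Z]$ (the $M$-averaging of (\ref{average}), the $AN$-eigenvector extraction, Rosenlicht applied to $P_\C$ acting on $Z_\C$), so the equivalence ``open $P$-orbit $\Leftrightarrow$ $\dim_\C\C[Z]_\lambda\le 1$'' in fact holds for quasi-affine irreducible $P$-varieties. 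But that observation must be made explicit for your argument to go through; alternatively, you could adopt the paper's affine-cone reduction, which stays within the $G$-equivariant setting and avoids the issue entirely. The remaining steps — $Z_0$ inheriting an open $P$-orbit (since $Z_0$ is Zariski-open and $P$-stable, any open $P$-orbit of $Z$ that meets $Z_0$ lies inside it), $Y_0$ being nonempty, irreducible, Zariski-dense in $(Y_0)_\C$, and propagation of the open $P$-orbit from $Y_0$ back to $Y$ — are all correctly handled.
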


\begin{proof} If $Z$ is quasi-affine, then this is immediate from the
  previous proposition as the restriction mapping $\C[Z]\to \C[Y]$ is
  onto.  The more general case is reduced to that by considering the
  affine cone over $Z$. Recall that $Z\subseteq\mathbb{P}(V)$. The
  preimage of $Z$ in $V\bs \{0\}$ will be denoted by $\hat Z$. Note
  that $\hat Z$ is quasi-affine. Moreover $Z$ is real spherical if and
  only if $\hat Z$ is real spherical for the enlarged reductive group
  $G_1 = G \times \R^\times$.
\end{proof}

\begin{cor}\label{cor2} Let $Z$ be a real spherical variety. Then the number of $G$-orbits on $Z$ is finite
  and each $G$-orbit is spherical.
\end{cor}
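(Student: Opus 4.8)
The plan is to prove finiteness of the number of $G$-orbits by Noetherian induction on closed $G$-invariant subvarieties of $Z$, using Corollary \ref{cor1} to stay within the class of real spherical varieties at every stage.

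First I would reduce to the case of a single $G$-orbit-closure. Since $Z_\C$ is irreducible, $Z$ has finitely many connected components, each Zariski-dense, so it suffices to produce, inside $Z$, a $G$-orbit that is open in $Z$. Indeed, if every real spherical variety contains an open $G$-orbit, then applying this to $Z$ gives an open orbit $\mathcal{O}_1$; its complement $Z\setminus\mathcal{O}_1$ is a closed $G$-invariant subvariety, whose irreducible components are real spherical by Corollary \ref{cor1}, and each has strictly smaller dimension. Iterating and invoking the Noetherian property of the Zariski topology terminates after finitely many steps, exhausting $Z$ by finitely many $G$-orbits. Each orbit appearing this way is a homogeneous real spherical space $G/H$ in the sense of Section 2 (its Zariski closure is real spherical, and the orbit is open in that closure, so $P$ has an open orbit on it), hence spherical; this gives the second assertion. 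So the whole theorem comes down to: \emph{a real spherical variety $Z$ has an open $G$-orbit}.

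For the existence of an open $G$-orbit I would argue as follows. Let $z\in Z$ with $P\cdot z$ open in $Z$ (Definition \ref{defi spherical variety}), and consider the $G$-orbit $\mathcal{O}=G\cdot z$, which is locally closed; let $Y=\overline{\mathcal{O}}\setminus\mathcal{O}$, a closed $G$-invariant subset of the closure, of strictly smaller dimension than $\mathcal{O}$. It is enough to show $\mathcal{O}$ is open in $Z$, equivalently $\dim\mathcal{O}=\dim Z$. Since $P\cdot z\subseteq\mathcal{O}$ is open in $Z$, we have $\dim\mathcal{O}\ge\dim(P\cdot z)=\dim Z$; but $\mathcal{O}\subseteq Z$ and $Z_\C$ is irreducible of dimension $\dim Z$, so $\mathcal{O}$ is dense and, being locally closed of full dimension, contains a nonempty Zariski-open subset of $Z$. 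In fact $\mathcal{O}$ itself is open: its complement in $Z$ is contained in the union of $\overline{\mathcal{O}}\setminus\mathcal{O}$ with the (closed, lower-dimensional, $G$-invariant) set $Z\setminus\overline{\mathcal{O}}$, both of which are proper closed subvarieties. Thus $\mathcal{O}$ is an open $G$-orbit, and we proceed by induction on $\dim Z$ applied to the real spherical variety (by Corollary \ref{cor1}, applied componentwise) $Z\setminus\mathcal{O}$.

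The step I expect to require the most care is the bookkeeping around reducibility and connected components: a closed $G$-invariant subvariety of $Z$ need not be irreducible, and $Z$ itself is only the real locus of an irreducible $Z_\C$ and may be disconnected, so Corollary \ref{cor1} must be applied to each irreducible component of each complement separately, after checking that such a component is Zariski-dense in an irreducible component of the corresponding complex variety and still carries an open $P$-orbit. The remaining subtlety is verifying that each $G$-orbit obtained is genuinely spherical in the homogeneous sense of Section 2, i.e. that $P$ has an open orbit on it; this follows because the orbit is open in its own closure, which is real spherical, and a $P$-orbit open in the closure meets the orbit in a $P$-stable open subset, which then contains an open $P$-orbit of the orbit itself.
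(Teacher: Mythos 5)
Your overall strategy --- Corollary \ref{cor1}, existence of open $G$-orbits, and induction on dimension --- is the same as the paper's (which phrases it as a minimal-counterexample argument rather than explicit induction). But there is a genuine gap in the inductive step: you remove \emph{one} open $G$-orbit $\mathcal{O}_1$ and assert that $Z\setminus\mathcal{O}_1$ is a closed $G$-invariant subvariety whose irreducible components have strictly smaller dimension, so that Corollary \ref{cor1} applies. That step fails because a single real $G$-orbit, even when Euclidean-open, need \emph{not} be Zariski-open, and its complement need be neither Zariski-closed nor lower-dimensional. Concretely, take $G=A=(\R^+)$ acting on $Z=\R^\times$ by multiplication: there are two open $G$-orbits $\R^+$ and $\R^-$; each has Zariski closure equal to all of $Z$, each complement has full dimension, and neither complement is a closed subvariety, so Corollary \ref{cor1} cannot be applied to it. Your assertion that ``$\overline{\mathcal{O}}\setminus\mathcal{O}$ and $Z\setminus\overline{\mathcal{O}}$ are proper closed subvarieties'' tacitly identifies the real orbit $\mathcal{O}$ with the Zariski-locally-closed set $(G_\C\cdot z)\cap Z$, which is generally strictly larger.

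The fix, and what the paper actually does, is to delete \emph{all} the open $G$-orbits at once. Their union is exactly $Z\cap(G_\C\cdot z)$ (the real locus of the open $G_\C$-orbit), hence Zariski-open, so its complement $Y_1$ is a proper closed $G$-invariant subvariety of strictly smaller dimension to which Corollary \ref{cor1} applies. For this to consume only finitely many orbits per step you also need the fact, recorded in Remark \ref{rmk elementary}(b), that a real spherical variety has only finitely many open $P$-orbits (hence finitely many open $G$-orbits); your write-up uses the existence of an open $G$-orbit but never invokes the finiteness of the set of open ones, and without it the iteration does not obviously terminate. With these two corrections --- delete all open $G$-orbits simultaneously, and cite finiteness of their number --- your induction becomes the paper's proof. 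The rest of your proposal, including the observation that each $G$-orbit is open in its Zariski closure, which is real spherical by Corollary \ref{cor1}, and hence is itself a real spherical homogeneous space, is correct and matches the paper's (implicit) justification of the second assertion.
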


\begin{proof} In view of the preceding corollary we only need to show
  that there are finitely many $G$-orbits. Suppose that there are
  infinitely many $G$-orbits. We let $Y\subseteq Z$ be a closed
  irreducible $G$-subvariety of minimal dimension which admits
  infinitely many $G$-orbits. By Corollary \ref{cor1}, $Y$ is
  spherical.  In particular $Y$ admits open $G$-orbits. After deleting
  the finitely many open $G$-orbits from $Y$ we obtain a $G$-invariant
  subvariety $Y_1\subseteq Y$ with infinitely many $G$-orbits.  As
  $\dim Y_1 < \dim Y$ we reach a contradiction.
\end{proof}

The main result of \cite{KS} was that every homogeneous real spherical
space admits only finitely many $P$-orbits. With Corollary \ref{cor2}
we then conclude:

\begin{theorem} Let $Z$ be a real spherical variety. Then the number
  of $P$-orbits on $Z$ is finite.
\end{theorem}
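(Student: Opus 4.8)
The plan is to reduce the statement about $P$-orbits on an arbitrary real spherical variety $Z$ to the homogeneous case, which is exactly the main theorem of \cite{KS}. By Corollary \ref{cor2}, $Z$ decomposes into finitely many $G$-orbits $\Oc_1,\dots,\Oc_r$, and each $\Oc_i$ is itself a real spherical variety in the sense of Definition \ref{defi spherical variety}. Since every $P$-orbit is contained in exactly one $G$-orbit, it suffices to show that each $G$-orbit $\Oc_i$ carries only finitely many $P$-orbits. Thus we may assume from the outset that $Z$ is a single $G$-orbit.

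The next step is to handle the distinction between $Z$ as a $G$-orbit inside a variety and $Z$ as a bare homogeneous space $G/H$ to which \cite{KS} applies directly. Here one must be slightly careful about connected components in the Euclidean topology (cf.~the Remark after Definition \ref{defi spherical variety}): a $G$-orbit $\Oc_i$ need not be a single coset space $G/H$ with $G$ connected, but it is a finite union of such, or equivalently $\Oc_i \cong G/H$ for a closed subgroup $H$ with $H/H_0$ finite, which is precisely the setting of Section 2. Moreover $\Oc_i$ is real spherical as a homogeneous space, meaning $P$ has an open orbit on it, since by Remark \ref{rmk elementary}(b) the real points of the unique open $P_\C$-orbit on $(\Oc_i)_\C$ furnish finitely many open $P$-orbits, in particular at least one. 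Hence each $\Oc_i \cong G/H_i$ is an almost algebraic real spherical homogeneous space.

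We then invoke the main theorem of \cite{KS}: every homogeneous real spherical space $G/H$ has only finitely many $P$-orbits. Applying this to each $\Oc_i \cong G/H_i$ gives that $\Oc_i$ has finitely many $P$-orbits, and summing over the finitely many $i$ yields that $Z$ has finitely many $P$-orbits. I do not anticipate any serious obstacle here, since all the real work is in Corollary \ref{cor2} (finiteness of $G$-orbits, which rests on the local structure theorem and Proposition \ref{m=1}) and in the external input \cite{KS}; the only point requiring mild attention is the bookkeeping around Euclidean connected components, namely checking that the notion of ``real spherical homogeneous space'' used in \cite{KS} matches the $G$-orbits arising here and that finiteness of $P$-orbits is insensitive to passing between a Zariski-dense union of such components and a single coset space.
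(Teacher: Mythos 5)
Your argument is exactly the paper's: invoke Corollary \ref{cor2} to reduce to finitely many $G$-orbits, note each is a real spherical homogeneous space, and apply the main result of \cite{KS} on finiteness of $P$-orbits in the homogeneous case. The extra remarks on Euclidean connected components are reasonable bookkeeping but do not change the route.
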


\subsection{The local structure theorem}

\par Let $Z$ be a real spherical variety and $Y\subseteq Z$ a
$G$-invariant closed subvariety. Our goal is to find a $P$-invariant
coordinate chart $Z_0$ for $Z$ which meets $Y$.  For that we may
assume that $Z$ is Zariski-closed in $\mathbb{P}(V)$, where $V$ is a
finite-dimensional $G$-module.  Moreover we may assume that
$Y\subseteq Z$ is a closed $G$-orbit. In particular $Y$ is real
spherical by Corollary \ref{cor2} and we let $Q_Y<G$ be a $Y$-adapted
parabolic.

Under these assumption on $Y$ and $Z$ there is the following immediate
generalization of Lemma \ref{affine}.

\begin{lemma}\label{affine2} Let $Z$ be real spherical variety, 
  closed in $\mathbb{P}(V)$, and $Y\subseteq Z$ a closed
  $G$-orbit. Then there exists a $Q_Y$-stable affine open subset
  $Z_0\subseteq Z$ which meets $Y$ and such that the restriction
  mapping:
$$\R[Z_0]^{(Q_Y)} \to \R[Z_0\cap Y]^{(Q_Y)}$$
is onto.
\end{lemma}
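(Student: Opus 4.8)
The plan is to reduce Lemma~\ref{affine2} to the already-established Lemma~\ref{affine} by the same affine-cone trick used in the proof of Corollary~\ref{cor1}. Concretely, set $G_1 = G\times\R^\times$ and let $\hat Z\subseteq V\setminus\{0\}$ be the preimage of $Z$ under the projection $V\setminus\{0\}\to\mathbb{P}(V)$; then $\hat Z$ is a quasi-affine irreducible $G_1$-variety, it is real spherical for $G_1$ (with minimal parabolic $P_1 = P\times\R^\times$), and the preimage $\hat Y$ of $Y$ is a Zariski-closed $G_1$-invariant subvariety. The first point to check is that a $Q_Y$-adapted parabolic of $G$ corresponds to a parabolic $\hat Q = Q_Y\times\R^\times$ of $G_1$ that is $\hat Y$-adapted (the extra central $\R^\times$ factor changes nothing about the relevant Levi/unipotent decomposition), so one is entitled to apply Lemma~\ref{affine} to $(\hat Z,\hat Y)$ with $P_1$ replaced by $\hat Q$ — which is legitimate because, inspecting the proof of Lemma~\ref{affine}, only the solvability of $AN$ (here the solvable radical of $\hat Q$) together with the $M$-averaging trick of \eqref{average} were used, and both features persist when $AN$ is enlarged to the solvable radical of a bigger parabolic.

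The second step is to transport the resulting $\hat Q$-stable affine open set $\hat Z_0\subseteq\hat Z$, meeting $\hat Y$ and with $\R[\hat Z_0]^{(\hat Q)}\to\R[\hat Z_0\cap\hat Y]^{(\hat Q)}$ onto, back down to $Z$. Since $\hat Z_0$ is stable under the central $\R^\times$-scaling, it is a cone, and its image $Z_0\subseteq Z$ in $\mathbb{P}(V)$ is $Q_Y$-stable, affine (projectivization of an $\R^\times$-stable affine cone in a vector space is affine), open, and meets $Y$. For the surjectivity statement one identifies $\R[Z_0]^{(Q_Y)}$ with the degree-zero, or more precisely the $\R^\times$-weight-graded, part of $\R[\hat Z_0]^{(\hat Q)}$: a $Q_Y$-semi-invariant regular function on $Z_0$ pulls back to a $\hat Q$-semi-invariant regular function on $\hat Z_0$ that is homogeneous of some degree under the scaling action, and conversely each homogeneous $\hat Q$-semi-invariant on $\hat Z_0$ descends (after dividing by a suitable power of a linear coordinate, exactly as one passes between $\C[\hat Z]$ and $\C[Z]$) to a $Q_Y$-semi-invariant on $Z_0$. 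Because the restriction $\R[\hat Z_0]^{(\hat Q)}\to\R[\hat Z_0\cap\hat Y]^{(\hat Q)}$ respects the $\R^\times$-grading and is onto in each graded piece, the induced map on the descended (projective) semi-invariant rings $\R[Z_0]^{(Q_Y)}\to\R[Z_0\cap Y]^{(Q_Y)}$ is onto as well.

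The step I expect to be the main obstacle is the bookkeeping in this last transport: one must be careful that $\hat Z_0\cap\hat Y$ really is the preimage of $Z_0\cap Y$ (it is, since $\hat Z_0$ is a cone and $\hat Y$ is the full cone over $Y$), and that the correspondence between semi-invariants upstairs and downstairs is degree-by-degree bijective, so that surjectivity is not lost when restricting to homogeneous pieces — here the key point is that for a given $Q_Y$-character on $Z_0$ there is a unique compatible $\R^\times$-weight upstairs, so matching characters downstairs forces matching weights upstairs and one may apply surjectivity of Lemma~\ref{affine} within the single relevant graded component. A minor secondary point, worth a sentence, is to note that one may shrink $\hat Z_0$ to be $\R^\times$-stable without loss (replace it by the union of its $\R^\times$-translates, or intersect the finitely many translates needed), so that it genuinely is a cone; this is automatic if one applies Lemma~\ref{affine} with the group $G_1$ rather than $G$, since then $\hat Z_0$ is $G_1$-relevant and in particular $\R^\times$-stable by construction. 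With these points handled the lemma follows.
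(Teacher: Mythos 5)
The overall strategy — pass to the affine cone $\hat Z\subset V\setminus\{0\}$ for $G_1 = G\times\R^\times$, run the argument there, and descend to $\mathbb P(V)$ — is reasonable and close in spirit to the paper's own one-line proof, which records that $Z_0$ is cut out by a $Q_Y$-semi-invariant homogeneous polynomial on $V$. The descent bookkeeping in your last paragraph is also fine.

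The gap is in the middle, where you assert that Lemma~\ref{affine} applies with $P_1$ replaced by $\hat Q=Q_Y\times\R^\times$ ``because only the solvability of $AN$ together with the $M$-averaging trick of~\eqref{average} were used, and both features persist for the solvable radical of a bigger parabolic.'' This is not accurate. The minimal parabolic $P=MAN$ is special in being a product of a compact group $M$ with a solvable group $AN$; that is precisely what lets one run Brion's Lie--Kolchin argument for $AN$ and then upgrade to $P$-semi-invariance by averaging over the compact $M$. A $Y$-adapted parabolic $Q_Y=L_YU_Y$ does not have this shape: the Levi $L_Y$ in general contains a non-compact semisimple part $(L_Y)_{\mathrm n}$, which lies neither in the solvable radical of $Q_Y$ nor in a compact factor over which one can average. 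An $AN$-eigenvector averaged over $M$ is only $P$-semi-invariant and need not be $(L_Y)_{\mathrm n}$-invariant, so it need not be $Q_Y$-semi-invariant; moreover, the surjectivity of $\R[Z_0]^{(Q_Y)}\to\R[Z_0\cap Y]^{(Q_Y)}$ is genuinely stronger than the $P$-version, since both sides are proper subspaces of their $P$-analogues. Closing the gap requires the additional hypotheses of Lemma~\ref{affine2} that are absent from Lemma~\ref{affine}: $Y$ is a single closed $G$-orbit and $Q_Y$ is $Y$-adapted, so $(L_Y)_{\mathrm n}$ lies in the isotropy algebra and, by Theorem~\ref{LST}, the open $Q_Y$-orbit of $Y$ has the form $Q_Y\times_{L_Y}(D_Y/C_Y)$ with $D_Y$ elementary. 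That is what guarantees genuine $Q_Y$-semi-invariants on $\hat Y$; lifting them to $\hat Z$ then goes through the $G$-isotypic decomposition of the surjection $\C[\hat Z]\to\C[\hat Y]$ and the observation that a $P$-highest-weight vector whose weight vanishes on $[L_Y,L_Y]$ is automatically a $Q_Y$-eigenvector — not through a solvability-plus-averaging mechanism.
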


\begin{proof} The proof is analogous to the one of Lemma \ref{affine}.
  We obtain $Z_0$ is the non-vanishing locus of a $Q_Y$-semi-invariant
  homogeneous polynomial function on $V$.
\end{proof}

\begin{cor}\label{affineopen} Let $Z\subset \mathbb{P}(V)$ be a closed 
  real spherical variety and $Y$ an elementary closed subvariety. Then
  there exists a $G$-stable affine open subset $Z_0\subset Z$ such
  that $Z_0\cap Y\neq \emptyset$.
\end{cor}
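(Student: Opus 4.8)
The plan is to deduce Corollary~\ref{affineopen} from Lemma~\ref{affine2} by exploiting that the $Y$-adapted parabolic $Q_Y$ is as large as possible when $Y$ is elementary. First I would recall that if $Y=G/H$ is elementary, then the kernel $J$ of the $G$-action on $Y$ has the property that $G/J\cong M'\times A'$ is elementary, i.e.\ $G/J=PJ/J$; equivalently $P$ already acts transitively on $Y$, so $Y$ is a single $P$-orbit. Thus a $Y$-adapted parabolic can be taken as large as $G$ itself: the construction of Theorem~\ref{LST}, applied to $Y$, terminates immediately at $Q_0=G$ because $G_{\mathrm n}\subseteq J\subseteq H$. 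Hence we may and do take $Q_Y=G$ in Lemma~\ref{affine2}.

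With $Q_Y=G$, Lemma~\ref{affine2} produces a $G$-stable affine open subset $Z_0\subseteq Z$ meeting $Y$, which is exactly the assertion of the corollary; the surjectivity of the restriction map $\R[Z_0]^{(G)}\to\R[Z_0\cap Y]^{(G)}$ is not needed here. The only point to verify carefully is therefore the claim in the previous paragraph, that an elementary homogeneous space is a single $P$-orbit and admits $G$ as a $Y$-adapted parabolic. This follows from Remark~\ref{rmk elementary}(d) together with the opening paragraph of the proof of Theorem~\ref{LST}: there it is observed that if $G_{\mathrm n}\subseteq H$ then $PH=G$ and $Q=G$ satisfies conditions (1)--(5) of that theorem, hence in particular $G$ itself is $Z$-adapted in the sense of Definition~\ref{defi Z-adapted}. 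Since $Y$ is elementary its group of symmetries $G/J$ has trivial $(\cdot)_{\mathrm n}$, so $G_{\mathrm n}\subseteq J\subseteq H_Y$ for the stabilizer $H_Y$ of a point of $Y$, and the hypothesis is met.

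The only genuine subtlety — and I expect it to be the main (if minor) obstacle — is the bookkeeping that $Y\subseteq Z$ need not be written as $G/H$ on the nose: $Y$ is a closed $G$-orbit in $\mathbb P(V)$, so it is $G$-equivariantly isomorphic to $G/H_Y$ for the stabilizer $H_Y$ of any chosen point, and one must check that $H_Y$ is closed with $H_Y/(H_Y)_0$ finite so that the results of Section~2 apply. This is automatic since $H_Y$ is algebraic (it is the stabilizer of a point in a projective $G$-variety). Once this is in place the corollary is immediate.

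\begin{proof}
  Write $J$ for the kernel of the $G$-action on $Y$ and fix a point of
  $Y$ with (algebraic, hence closed with finite component group)
  stabilizer $H_Y\supseteq J$, so that $Y\cong G/H_Y$ as a real
  spherical homogeneous space. Since $Y$ is elementary, $G/J$ is
  elementary, whence $\gf_{\mathrm n}\subseteq\jf\subseteq\hf_Y$. By
  the argument in the first paragraph of the proof of Theorem
  \ref{LST} (applied to $Y=G/H_Y$), the full group $Q=G$ then
  satisfies conditions (1)--(3) of Definition \ref{defi Z-adapted},
  that is, $G$ itself is a $Y$-adapted parabolic subgroup. Applying
  Lemma \ref{affine2} with $Q_Y=G$ yields a $G$-stable affine open
  subset $Z_0\subseteq Z$ with $Z_0\cap Y\neq\emptyset$.
\end{proof}
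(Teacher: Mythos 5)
Your proposal is correct and follows exactly the paper's (one-line) proof, which simply observes that $Q_Y=G$ when $Y$ is elementary and invokes Lemma \ref{affine2}; you have merely filled in the justification that $\gf_{\mathrm n}\subseteq\hf_Y$ forces $G$ itself to be $Y$-adapted. No further comment is needed.
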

\begin{proof} One has $Q_Y=G$. \end{proof}

We now start with the construction of $Z_0$. In case $Y$ is
elementary, $Z_0$ is given by Corollary \ref{affineopen}. So let us
assume that $Y$ is not elementary, i.e. $G_{\mathrm n}$ does not act
trivially on $Y$.  Let $\bar P=MA\bar N$ be opposite to $P$.  As
$Y\subseteq \mathbb{P}(V)$ is closed, we can find a vector $y_0\in V$
such that $[y_0]\in Y$ is $A\bar N$-fixed, and such that $A$ acts by a
non-trivial character on $y_0$.  This can be seen as follows. Assume
for simplicity that $V$ is irreducible.  Then $Y$ contains a vector
$y$ of which the $A$-weight decomposition has a non-trivial component
$y_0$ in the lowest weight space of $V$.  Compression of $y$ by $A^+$
then exhibits a non-zero multiple of $y_0$ as a limit of elements from
$Y$.

Next we choose $v_0^*\in V^*$ such that $[v_0^*]$ is $AN$-fixed and
$v_0^*(y_0)=1$.  Let $\chi: A \to \R^+$ be the character defined by
$a\cdot v_0^* = \chi(a)v_0^*$.

Consider the function
$$F: V \to \R,\ \  v\mapsto \int_M v_0^*(m\cdot v)^2 \ dm$$
and note that
$$F(man\cdot v)= \psi(a) F(v)$$
for all $man\in MAN$ and $v\in V$, where $\psi=\chi^{-2}$. Further $F$
is real algebraic and homogeneous of degree $2$. Thus $\{ [v]\in
\mathbb{P}(V)\mid F(v)\neq 0\}$ defines an affine open set in
$\mathbb{P}(V)$ and the intersection with $Z$ yields an affine open
set $Z_0$.  Note that $F$ is not constant and hence $Z_0$ is a proper
subvariety.  We define $Q\supseteq P$ to be the parabolic subgroup
which fixes the line $\R F|_{Z_0}$, that is $Q=\{ g\in G\mid
gZ_0=Z_0\}$.

As before we define on $Z_0$ a moment-type map:
$$\mu: Z_0\to \gf^*, \ \ \mu(z)(X):=\frac{dF(v)(X)}{F(v)}\, $$
for $z=[v]\in Z\subseteq \mathbb{P}(V)$. This map is algebraic and
$Q$-equivariant. Let $U<Q$ be the unipotent radical.

We claim that $\im \mu$ is a $Q$-orbit. In fact for $X\in \qf$ we have
$\mu(z)(X)=d\psi(X)$ for all $z\in Z$, and after identifying $\gf$
with $\gf^*$ we obtain as in the previous section that
$$\im \mu = \Ad(Q) X_0= X_0 + \uf $$
with $X_0=\mu([y_0])$. The stabilizer of $X_0$ determines a
Levi-subgroup $L<Q$.  Then $S:=\mu^{-1}(X_0)$ is an $L$-stable affine
subvariety of $Z_0$ and we obtain an algebraic isomorphism
$$ Q \times_L S \to Z_0\, .$$ 
The affine $L$-variety $S$ is real spherical and meets $Y$.  We
continue the procedure with $(L,S,S\cap Y)$ instead of $(G,Z,Y)$.  The
procedure will stop at the moment when $S\cap Y$ is fixed under
$L_{\mathrm n}$.  We have thus shown:

\begin{theorem}[Local structure theorem, general case] Let $Z$ be a
  real spherical variety and $Y\subseteq Z$ a closed $G$-invariant
  subvariety. Then there is parabolic subgroup $Q\supseteq P$ with
  Levi-decomposition $Q=LU$ with the following properties: There is a
  $Q$-invariant affine open piece $Z_0\subseteq Z$ meeting $Y$ and an
  $L$-invariant closed spherical subvariety $S\subseteq Z_0$ such
  that:
  \begin{enumerate}
  \item There is a $Q$-equivariant isomorphism
$$ Q \times_L S \to Z_0\, .$$ 
\item $S\cap Y$ is an elementary spherical $L$-variety.
\end{enumerate}
\end{theorem}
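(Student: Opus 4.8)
The plan is to run the iterative reduction described in the paragraphs just before the theorem, checking carefully that each step produces a well-defined parabolic, a well-defined moment-type map, and a genuine isomorphism $Q\times_L S\to Z_0$, and that the process terminates at an elementary $L$-variety. First I would dispose of the base case: if $Y$ is elementary, then $G_{\mathrm n}$ acts trivially on $Y$, so $Q_Y=G$ and Corollary \ref{affineopen} produces a $G$-stable affine open $Z_0\subseteq Z$ meeting $Y$, and one takes $L=G$, $U=\{e\}$, $S=Z_0$, which trivially satisfies (1) and (2). So assume $Y$ is not elementary.

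Next I would carry out one step of the reduction. Fix $\bar P=MA\bar N$ opposite to $P$. The key geometric input is that, since $Y\subseteq\mathbb P(V)$ is closed and not elementary, $Y$ contains an $A\bar N$-fixed point $[y_0]$ on which $A$ acts by a nontrivial character; this comes from taking any $y\in V$ whose image lies on $Y$, decomposing along $A$-weights, and compressing by $A^+=\{a\in A:\alpha(\log a)\geq 0\ \forall\alpha\}$ so that the lowest-weight component $y_0$ survives as a limit of points of $Y$ (the irreducible case suffices because $Y$ spans some irreducible summand on which $G_{\mathrm n}$ acts nontrivially, and Lemma \ref{extra} guarantees a nontrivial lowest weight). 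Pick $v_0^*\in V^*$ with $[v_0^*]$ fixed by $AN$ and $v_0^*(y_0)=1$, and form $F(v)=\int_M v_0^*(m\cdot v)^2\,dm$; this is a real algebraic homogeneous degree-$2$ function, $P$-semi-invariant with character $\psi=\chi^{-2}$ under $AN$ and $M$-invariant, with $F(y_0)=v_0^*(y_0)^2=1\neq0$. Then $Z_0:=\{[v]\in Z: F(v)\neq0\}$ is a $P$-stable affine open subset meeting $Y$ (at $[y_0]$), and $Q:=\{g\in G: gZ_0=Z_0\}=\{g\in G:gF\in\R F|_{Z_0}\}$ is a parabolic subgroup containing $P$, with unipotent radical $U$. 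One checks $Q$ acts on $Z_0$ by $gF|_{Z_0}=\psi_Q(g)F|_{Z_0}$ for a character $\psi_Q$ extending $\psi$.

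Then I would analyze the moment-type map $\mu:Z_0\to\gf^*$, $\mu(z)(X)=dF(v)(X)/F(v)$ for $z=[v]$. Exactly as in the proof of Theorem \ref{LST}, $\mu$ is algebraic and $Q$-equivariant for the coadjoint action; on $\qf$ it is constant equal to $d\psi_Q$, so $\im\mu\subseteq X_0+\qf^\perp$; identifying $\gf^*\cong\gf$ via $\kappa$, one has $\qf^\perp\cong\uf$ and $X_0:=\mu([y_0])$ is a semisimple element whose $\af$-part makes $-\ad(X_0)$ a nonnegative-eigenvalue derivation of $\uf$; the nilpotent-Lie-algebra lemma (applied with $X=-\ad(X_0)$) gives $\Ad(U)X_0=X_0+\uf$, hence $\im\mu=\Ad(Q)X_0=X_0+\uf$ is a single $Q$-orbit. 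Submersivity of $\mu$ then makes $S:=\mu^{-1}(X_0)$ an $L$-stable ($L$ = stabilizer of $X_0$, a reductive Levi of $Q$) affine closed subvariety of $Z_0$, and $Q\times_L S\to Z_0$, $[q,s]\mapsto q\cdot s$, is a $Q$-equivariant isomorphism — surjectivity from $\im\mu=\Ad(Q)X_0$, injectivity from the fibers of $\mu$. That $S$ is real spherical follows from Corollary \ref{cor1}: a $Q$-stable affine piece of $Z$ restricted to $S$ inherits the open $(L\cap P)$-orbit from the open $P$-orbit on $Z_0$; and $S$ meets $Y$ at $[y_0]$, so $S\cap Y$ is a nonempty closed $L$-invariant subvariety of the spherical $L$-variety $S$, hence itself spherical by Corollary \ref{cor2}.

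Finally I would argue termination. One iterates, replacing $(G,Z,Y)$ by $(L,S,S\cap Y)$; the resulting $Q$'s are assembled as in Theorem \ref{LST} (taking $RP=RU$ at each stage, a subgroup since $R$ normalizes the accumulated unipotent radical), so that property (1) propagates. At each non-terminal step $Q\cap G_{\mathrm n}$ is a proper parabolic of $G_{\mathrm n}$, exactly because $X_0+\uf$ is a nontrivial orbit (equivalently $e^\nu=\chi^2$ is nontrivial on $G_{\mathrm n}\cap A$), so the new Levi is strictly smaller along the noncompact semisimple directions; since the noncompact semisimple part of a minimal parabolic is trivial, after finitely many steps the current Levi $L$ has $L_{\mathrm n}$ acting trivially on $S\cap Y$, i.e. $S\cap Y$ is elementary, giving (2). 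The main obstacle I anticipate is the geometric extraction of the boundary point $[y_0]\in Y$ with the correct $A$-weight behavior when $V$ is reducible: one must track which irreducible summand of $V$ carries a nontrivial $G_{\mathrm n}$-action on the span of $Y$ and ensure the lowest-weight component there is nonzero (via Lemma \ref{extra}) and survives the $A^+$-compression; the rest is a faithful algebraic transcription of the symplectic argument already carried out in the homogeneous case.
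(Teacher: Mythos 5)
Your proposal follows the paper's argument essentially step for step: the extraction of the $A\bar N$-fixed point $[y_0]\in Y$, the construction of the $M$-averaged function $F$, the affine open $Z_0=\{F\neq 0\}$, the parabolic $Q$ stabilizing $\R F|_{Z_0}$, the moment-type map $\mu$ and the identification $\im\mu=X_0+\uf$ via the nilpotent-Lie-algebra lemma, the slice $S=\mu^{-1}(X_0)$, and the iteration on $(L,S,S\cap Y)$ terminating because the noncompact semisimple part of a minimal parabolic is trivial. The only minor quibble is a misattribution (you cite Corollary \ref{cor1} for $S$ being spherical, whereas the correct reason is the bundle isomorphism $Q\times_L S\cong Z_0$ transferring the open $P$-orbit to an open $(L\cap P)$-orbit on $S$, as you in fact also say), and your concern about reducible $V$ in the extraction of $[y_0]$ is a fair observation about a point the paper itself handles only in the irreducible case; neither affects the correctness of the argument.
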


\section{The normalizer of a spherical subalgebra}

As in the preceding section we assume that $G$ is algebraic and let
$\hf$ be the Lie algebra of a spherical subgroup $H<G$.  We denote by
$\tilde \hf:=\nf_\gf(\hf)$ the normalizer of $\hf$ in $\gf$ and by
$\tilde H$ the normalizer in $G$.  Note that $\hf \triangleleft
\tilde\hf$ is an ideal.  Let $ \pf$ be a minimal parabolic subalgebra
such that $\pf +\hf =\gf$ and let $\qf$ denote the unique parabolic
subalgebra above $\pf$, which is $Z$-adapted. Let $\tilde Z=G/\tilde
H$.

\begin{lemma}\label{same Q}
  The parabolic subalgebra $\qf$ is also $\tilde Z$-adapted.
\end{lemma}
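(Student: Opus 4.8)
The plan is to verify the three defining conditions of $\tilde Z$-adaptedness from Definition \ref{defi Z-adapted} directly, using the characterization of $\qf$ provided by Theorem \ref{unique} together with the relations among $\hf$, $\tilde\hf$, $\nf$ and $\uf$. The key observation is that $\nf\cap\hf = \nf\cap\tilde\hf$: indeed $\hf\subseteq\tilde\hf$ gives $\subseteq$, and for the reverse inclusion one uses that $\nf\cap\tilde\hf$ normalizes $\hf$ and consists of nilpotent elements, so $(\nf\cap\tilde\hf) + \hf$ is a subalgebra in which $\hf$ is an ideal of finite codimension; a dimension count against the open orbit condition $\pf+\hf=\gf$ (equivalently $\nf\cap\hf$ complementary to $\uf$ in $\nf$, which forces $\dim(\nf\cap\hf)$ to be as large as possible compatible with $\hf$ being spherical) pins down equality. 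Granting this, the unique parabolic $\tilde\qf$ above $\pf$ whose unipotent radical $\tilde\uf$ is complementary to $\nf\cap\tilde\hf$ in $\nf$ coincides with the one complementary to $\nf\cap\hf$, i.e. with $\qf$ by Theorem \ref{unique}.

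First I would check condition (1) of $Z$-adaptedness for $\tilde Z$: we need a minimal parabolic $\pf\subseteq\qf$ with $\pf+\tilde\hf=\gf$. This is immediate from $\pf+\hf=\gf$ and $\hf\subseteq\tilde\hf$; and $\pf\subseteq\qf$ holds by hypothesis. Next, condition (3): $\tilde\lf_{\mathrm n}\subseteq\tilde\hf$. Since $\qf=\lf U$ is $Z$-adapted we have $\lf_{\mathrm n}\subseteq\hf\subseteq\tilde\hf$, and $\tilde\lf_{\mathrm n}$ is the same subalgebra $\lf_{\mathrm n}$ (the noncompact nonabelian part of the Levi depends only on $\qf$, not on $\hf$), so (3) follows at once. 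The substantive point is condition (2): there is a Levi decomposition $\qf=\lf U$ with $\qf\cap\tilde\hf\subseteq\lf$. Take the same $\lf$ as for $Z$. Then $\nf\cap\hf=\nf\cap\lf$ by $Z$-adaptedness of $\qf$, and combined with $\nf\cap\hf=\nf\cap\tilde\hf$ we get $\nf\cap\tilde\hf=\nf\cap\lf$; hence $\tilde\uf\cap\tilde\hf = \uf\cap\tilde\hf = \{0\}$ (using $\uf\cap\nf=\uf$ and the complementarity), which together with the general fact $\qf=\lf\oplus\uf$ and $\qf\cap\tilde\hf$ being a subalgebra forces $\qf\cap\tilde\hf\subseteq\lf$.

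The main obstacle I expect is the equality $\nf\cap\hf=\nf\cap\tilde\hf$, which is where the spherical hypothesis really enters. One must rule out that passing from $\hf$ to the larger $\tilde\hf$ picks up extra nilpotent directions inside $\nf$. I would argue as follows: let $\nf_1 := \nf\cap\tilde\hf$; it is an ad-nilpotent subalgebra normalizing $\hf$, so $\hf + \nf_1$ is a subalgebra containing $\hf$ as an ideal. Since $\tilde Z = G/\tilde H$ is again real spherical (being a quotient of the spherical $Z$ — the image of the open $P$-orbit is open), the open orbit condition for $\tilde H$ gives $\pf+\tilde\hf=\gf$, so $\nf\cap\tilde\hf$ is complementary to some $\tilde\uf$ in $\nf$; but the same holds for $\hf$ with complement $\uf$, and $\nf\cap\hf\subseteq\nf\cap\tilde\hf$ forces $\tilde\uf\subseteq\uf$. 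The reverse inclusion $\uf\subseteq\tilde\uf$ — equivalently that $\tilde\hf$ acquires no new part of $\uf$ — follows because $\uf = \qf\cap\nf$ and $\qf$ is $\tilde Z$-adapted once we know conditions (1),(3) (already established independently of this point) plus the Lemma after Theorem \ref{unique} applied to $\tilde\hf$; this requires a small bootstrapping, or alternatively one invokes $\lf_{\mathrm n}\subseteq\hf$ to see $\uf'\cap\lf\subseteq\uf'\cap\hf=\{0\}$ for any candidate complement $\uf'$ of $\nf\cap\tilde\hf$ and then applies the Lemma to conclude $\uf\supseteq\uf'$, forcing $\tilde\uf=\uf$. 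I would streamline the write-up by running exactly this last argument — it is the shortest path and avoids any circularity.
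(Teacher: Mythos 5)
Your framework --- check the three conditions of Definition \ref{defi Z-adapted} for $\tilde\hf$ directly, and reduce condition (2) to the equality $\nf\cap\hf=\nf\cap\tilde\hf$ --- is a sensible one, and conditions (1) and (3) are dispatched correctly. But the argument you give for the equality $\nf\cap\hf=\nf\cap\tilde\hf$ (equivalently $\uf\subseteq\tilde\uf$, equivalently $\uf\cap\tilde\hf=\{0\}$) does not establish it, and that equality is the whole content of the lemma. When you ``invoke $\lf_{\mathrm n}\subseteq\hf$ to see $\tilde\uf\cap\lf\subseteq\tilde\uf\cap\hf=\{0\}$ and then apply the Lemma'', what the Lemma after Theorem \ref{unique} produces is $\qf\subseteq\tilde\qf$, i.e.\ $\tilde\uf\subseteq\uf$ --- exactly the inclusion that is already free from $\nf\cap\hf\subseteq\nf\cap\tilde\hf$. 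It does not ``force $\tilde\uf=\uf$.'' To run the Lemma in the other direction you would need a Levi $\tilde\lf$ of $\tilde\qf$ with $\tilde\lf\cap\uf=\{0\}$; but $\tilde\lf\cap\uf\subseteq\tilde\lf\cap\nf\subseteq\tilde\lf_{\mathrm n}\subseteq\tilde\hf$, so controlling this intersection requires precisely the unknown statement $\uf\cap\tilde\hf=\{0\}$. The ``small bootstrapping'' variant fails for the same reason: Theorem \ref{unique} ties the complementarity of $\uf$ to all three conditions of $Z$-adaptedness, so knowing (1) and (3) for $\tilde\hf$ alone does not yield $\uf\oplus(\nf\cap\tilde\hf)=\nf$. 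And the claim near the beginning that a ``dimension count against the open orbit condition \dots pins down equality'' is not a proof: $\pf+\hf=\gf$ fixes $\dim(\nf\cap\hf)$ once $\dim\hf$ is fixed, but $\tilde\hf$ is larger than $\hf$ and there is no a priori reason its extra directions cannot lie in $\nf$.

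What the paper actually does at this point is a genuinely different and substantive argument. Assuming $\qf\subsetneq\tilde\qf$, it notes that then $\nf\cap\hf\subsetneq\nf\cap\tilde\hf$, so $\tilde\hf/\hf$ contains a nonzero ad-nilpotent element and hence cannot be compact. It then proves directly that $\cf=\tilde\hf/\hf$ \emph{is} compact: decompose $\C[Z]$ into $G$-isotypic pieces, let $C=\tilde H/H$ act on the finite-dimensional multiplicity spaces $\M_\pi$, define on each $\M_\pi$ a positive-definite form via $M$-averaged matrix coefficients evaluated at $z_0$, and use multiplicity-freeness (Proposition \ref{m=1}) to show $C$ scales this form by a real character $\chi_\pi$. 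Taking finitely many $\pi_j$ so that $C$ acts faithfully on $\bigoplus\M_{\pi_j}$, the subgroup $C_1=\bigcap\ker\chi_{\pi_j}$ acts unitarily and faithfully, hence is compact, and $C/C_1$ embeds in $(\R^\times)^k$, so $\cf$ is compact. This is where the spherical hypothesis enters irreplaceably, and none of it is supplied by your proposal. As written, the proof has a genuine gap at its central step.
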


\begin{proof}
  We write $\tilde \qf$ for the unique $\tilde Z$-adapted parabolic
  above $\pf$ and $\tilde\uf$ for its unipotent radical. Then
$$\nf=(\nf\cap\hf)\oplus\uf= (\nf\cap\tilde\hf)\oplus\tilde\uf$$
It follows that $\tilde\uf\subseteq\uf$ and $\qf\subseteq\tilde\qf$.
To obtain a contradiction we assume that $\qf \subsetneq
\tilde\qf$. Then $\tilde\uf\subsetneq\uf$ and $\nf\cap\hf\subsetneq
\nf\cap\tilde\hf$.  In particular, the Lie algebra $\tilde\hf/ \hf$
cannot be compact.

To conclude the proof we now show that $\tilde\hf/\hf$ is compact.
Suppose first that $Z$ is quasi-affine and let $\C[Z]= \bigoplus_{\pi
  \in \hat G} \C[Z]_\pi$ be the decomposition of the $G$-module
$\C[Z]$ into $G$-isotypical components.  For each $\pi$ we choose a
model space $V_\pi$ and let $\M_\pi := \operatorname{Hom}_G (V_\pi ,
\C[Z])$ be the corresponding multiplicity space. Note that $\M_\pi$ is
finite dimensional as there is a natural identification of $\M_\pi$
with the space of $H$-fixed elements in $V_\pi^*$.

Let $C:=\tilde H/ H$. Note that $C$ acts from the right on $\C[Z]$ and
preserves each $\C[Z]_\pi$, thus inducing an action on $\M_\pi$. Since
$Z$ is quasi-affine we can choose finitely many $\pi_1, \ldots, \pi_k$
so that we obtain a faithful representation of $C$ on the sum
$\M:=\bigoplus_{j=1}^k \M_{\pi_j}$.

\par Let $B<G_\C$ be a Borel subgroup contained in $P_\C$.  For every
$\pi$ we let $v_\pi$ be a $B$-highest weight vector in $V_\pi$.  To
every $\eta\in \M_\pi$ we associate the function $f_\eta(g)=
\eta(\pi(g^{-1})v_\pi)$ and define an inner product on $\M_\pi$ by
$$\la \eta, \eta\ra_\pi :=  (|f_\eta|^2)^M (z_0)\, $$
with notation of (\ref{average}).  As $(|f_\eta|^2)^M$ is a matrix
coefficient of a representation in $\Lambda$, and as multiplicities
for these are at most one by Proposition \ref{m=1}, we obtain that
there is a real character $\chi_\pi : C \to \R^\times$ such that
$$ \la  h \cdot \eta, h\cdot \eta\ra_\pi= \chi_\pi(h)\la \eta, \eta\ra_\pi\, .$$ 

The group $C_1:=\bigcap_{j=1}^k \ker \chi_{\pi_j}$ acts unitarily and
faithfully on $\M$, hence is compact.  By definition
$C/C_1<(\R^\times)^k$, hence the Lie algebra of $C$ is compact.

Finally we reduce to the quasi-affine case using the affine cone over
$\mathbb{P}(V)$ as before, see the proof of Corollary \ref{cor1}.
\end{proof}

Let $Q=LU$ be a Levi decomposition as in Definition \ref{defi
  Z-adapted} and recall the decomposition (\ref{second decomposition
  l}).

\begin{prop} The normalizer $\tilde \hf$ of $\hf$ is of the form
  \begin{equation}\label{direct sum}
    \tilde \hf = \hf \oplus \tilde \cf
  \end{equation}
  with $\tilde\cf$ a subalgebra of the form $\tilde \cf = \tilde \af
  \oplus \tilde \mf$ where $\tilde\af< \zf(\lf)_{\mathrm n\mathrm p}$
  and $\tilde \mf< \zf(\lf)_{\mathrm c\mathrm p} +\lf_{\mathrm c}$.
\end{prop}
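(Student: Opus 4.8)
The plan is to analyze the $\tilde H$-action on $Z = G/H$ using the structure provided by the local structure theorem. Recall that $Q = LU$ is $Z$-adapted, so $\lf_{\mathrm n} \subseteq \hf$, $Q \cap H \subseteq L$, and the map $Q \times_L (L/L\cap H) \to P \cdot z_0$ is a diffeomorphism. Since $\tilde H$ normalizes $H$, the group $C = \tilde H/H$ acts on $Z$ by $G$-automorphisms, commuting with the left $G$-action; in particular $C$ acts on the open orbit $P z_0$ and preserves the fibration $Q \times_L (L/L\cap H)$. By Lemma \ref{same Q}, $\qf$ is also $\tilde Z$-adapted, so $\tilde Z = G/\tilde H$ has the same associated parabolic, and consequently $Q \cap \tilde H \subseteq L$ as well; moreover $\nf \cap \tilde\hf = \nf \cap \lf = \nf \cap \hf$, which forces $\tilde\hf \cap \nf = \hf \cap \nf$. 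The first reduction is therefore that $\tilde\hf$ differs from $\hf$ only inside $\lf$: writing $\tilde\hf = (\tilde\hf \cap \lf) + (\tilde\hf \cap \uf)$ is not automatic, but since $\tilde H$ normalizes $H$ and $U \cap H$ equals... — more precisely, I would argue that $\tilde\hf \subseteq \hf + \lf$ by looking at the projection along $\qf = \lf + \uf$ and using that the $\uf$-component is controlled by $\nf$-invariance, reducing everything to understanding the normalizer of $\lf \cap \hf = \cf \oplus \lf_{\mathrm n}$ inside $\lf$.

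Next I would exploit that, by the decomposition $\lf = \zf(\lf) \oplus \lf_{\mathrm c} \oplus \lf_{\mathrm n}$ and the fact that $\lf_{\mathrm n} \subseteq \lf \cap \hf$, the normalizer of $\lf \cap \hf = \cf \oplus \lf_{\mathrm n}$ in $\lf$ splits: the $\lf_{\mathrm n}$-part is an ideal and contributes nothing new (it already lies in $\hf$), so $\nf_\lf(\lf\cap\hf)$ is governed by $\nf_\df(\cf)$ where $\df = \zf(\lf) \oplus \lf_{\mathrm c}$ is the Lie algebra of the compact-modulo-center group $D = L/L_{\mathrm n}$. On the reductive compact-type algebra $\df$, every subalgebra is reductive in $\df$, and the normalizer $\nf_\df(\cf)$ decomposes as $\cf \oplus (\text{orthogonal complement of } \cf \text{ in } \nf_\df(\cf))$ with respect to a $K$-invariant form; the complementary piece $\tilde\cf$ is then a subalgebra (since it is the set of $\df$-elements centralizing $\cf$ modulo $\cf$ — here one uses that $\cf$ is an ideal in its normalizer and the form is non-degenerate) lying in $\df = \zf(\lf)_{\mathrm n\mathrm p} \oplus \zf(\lf)_{\mathrm c\mathrm p} \oplus \lf_{\mathrm c}$. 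To pin down that $\tilde\af := \tilde\cf \cap \zf(\lf)_{\mathrm n\mathrm p}$ is all of the "toral" part and $\tilde\mf := \tilde\cf \cap (\zf(\lf)_{\mathrm c\mathrm p} \oplus \lf_{\mathrm c})$, I would note $\zf(\lf)$ is central in $\lf$ so any complement to $\cf \cap \zf(\lf)$ in $\zf(\lf)$ automatically normalizes $\lf\cap\hf$, and $\zf(\lf)_{\mathrm n\mathrm p}$ contributes via $\tilde\af$; the remaining normalizer directions in $\lf_{\mathrm c} \oplus \zf(\lf)_{\mathrm c\mathrm p}$ give $\tilde\mf$.

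The directness of the sum (\ref{direct sum}) — that $\tilde\hf = \hf \oplus \tilde\cf$ and not merely $\tilde\hf = \hf + \tilde\cf$ — follows once $\tilde\cf$ is chosen as a linear complement to $\hf$ inside $\tilde\hf$; the content is that such a complement can be taken to be a \emph{subalgebra} of the stated form, and this is exactly where the compactness of $\df$ (equivalently, complete reducibility of $\ad_\df$) is used: a $\cf$-invariant complement to $\cf$ in $\nf_\df(\cf)$ is automatically $\ad(\cf)$-stable, and since $\cf$ acts trivially on $\nf_\df(\cf)/\cf$ one checks the bracket of two complement elements lies back in the complement plus $\cf$, but the $\cf$-component vanishes by invariance of the chosen splitting. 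Combining: $\tilde\cf = \tilde\af \oplus \tilde\mf$ with $\tilde\af \subseteq \zf(\lf)_{\mathrm n\mathrm p}$ and $\tilde\mf \subseteq \zf(\lf)_{\mathrm c\mathrm p} \oplus \lf_{\mathrm c}$, as claimed.

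The main obstacle I anticipate is the very first reduction — showing rigorously that $\tilde\hf \subseteq \hf + \lf$, i.e.\ that the normalizer acquires nothing transverse to $\lf$. Lemma \ref{same Q} gives $\nf \cap \tilde\hf = \nf \cap \hf$, but one must upgrade this from an intersection-with-$\nf$ statement to an intersection-with-$\uf$ statement and then to an "all of $\tilde\hf$ lies in $\hf + \lf$" statement; the clean way is to observe that $\tilde\hf$ is reductive-plus-nilpotent-ideal decomposable and its unipotent part, being normalized by $\hf$ hence (after conjugating) sitting in $\nf$, must already lie in $\hf$, and any semisimple/toral part can be $\Ad(H)$-conjugated into $\lf$ using the local structure theorem's description $P z_0 \cong U \times D/C$. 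Once inside $\lf$, the compact-type analysis of $\df$ is routine.
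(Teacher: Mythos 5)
Your skeleton follows the paper's (invoke Lemma~\ref{same Q}, reduce to $\lf$, then split off a toral and a compact part), but two of the steps have genuine gaps. The first is the reduction $\tilde\hf=\hf+(\tilde\hf\cap\lf)$, which you flag as the main obstacle but for which you already hold all the ingredients: since $PH$ is open and $\pf\subseteq\qf$ we have $\gf=\hf+\qf$, so any $X\in\tilde\hf$ can be written $X=Y+X'$ with $Y\in\hf$ and $X'\in\qf$; then $X'=X-Y\in\tilde\hf\cap\qf\subseteq\lf$ by Lemma~\ref{same Q}. Your proposed substitute --- conjugating the unipotent radical of $\tilde\hf$ into $\nf$ and $\Ad(H)$-conjugating the reductive part into $\lf$ --- does not work, because conjugation moves $\hf$, $\lf$ and $\qf$ along with $\tilde\hf$ and so cannot be used to compare $\tilde\hf$ with the fixed subspace $\hf+\lf$.

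The second, more serious, gap is that you replace $\tilde\hf\cap\lf=\{X\in\lf : [X,\hf]\subseteq\hf\}$ by $\nf_\lf(\lf\cap\hf)$, equivalently by $\nf_\df(\cf)$. These are different objects: normalizing $\lf\cap\hf$ is much weaker than normalizing $\hf$, since $\hf\not\subseteq\lf$. For the group case $Z=(G'\times G')/\Delta G'$ with $G'=\SL(2,\R)$ one has $\lf=\zf(\lf)=\af'\times\af'$, so $\nf_\lf(\lf\cap\hf)=\lf$, whereas $\nf_\gf(\hf)\cap\lf=\Delta\af'=\cf$ and $\tilde\cf=0$. The same confusion undermines your argument for $\tilde\cf=\tilde\af\oplus\tilde\mf$: that $\zf(\lf)$ is central in $\lf$ only shows that the components $X_a\in\zf(\lf)_{\mathrm n\mathrm p}$ and $X_m\in\zf(\lf)_{\mathrm c\mathrm p}+\lf_{\mathrm c}$ of $X\in\tilde\cf$ normalize $\lf\cap\hf$, not that they normalize $\hf$ --- and the latter is precisely the content of the step. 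The paper's argument here, which is the one non-formal point of the proof, is that $\ad X_a$ and $\ad X_m$ commute and are semisimple with real, respectively purely imaginary, spectrum; they are thus the hyperbolic and elliptic parts of the semisimple operator $\ad X$, and since $\ad X$ preserves $\hf$, each of them does. This argument, or an equivalent one, is missing from your write-up. (Your observation that a $\kappa$-orthogonal, $\ad(\cf)$-invariant complement inside the compact-type algebra $\df$ is automatically a subalgebra is fine and consistent with the paper's terse justification of that point.)
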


\begin{proof} From Lemma \ref{same Q} we conclude that $\tilde\hf =
  \hf + \tilde\hf \cap \lf$, and we obtain (\ref{direct sum}) with a
  subspace $\tilde\cf$ of $\zf(\lf) +\lf_{\mathrm c}$. It is a
  subalgebra because $\zf(\lf) +\lf_{\mathrm c}$ is reductive and
  $\hf$ is an ideal in $\tilde\hf$.
  \par Write $\tilde\af$ for the orthogonal projection of $\tilde\cf$
  to $\zf(\lf)_{\mathrm n\mathrm p}$ and $\tilde\mf $ for the
  orthogonal projection of $\tilde\cf$ to $\zf(\lf)_{\mathrm c\mathrm
    p} +\lf_{\mathrm c}$. Then, $\tilde\cf \subseteq \tilde \af
  +\tilde \mf$ and it remains to show equality. This will follow if we
  can show that both $\tilde\af$ and $\tilde\mf$ normalize $\hf$. For
  that we decompose $X\in \tilde\cf$ as $X=X_a+X_m$ with $X_a\in
  \tilde \af$ and $X_m\in \tilde\mf$.  Observe that $\ad X_a$ commutes
  with $\ad X_m$. Both operators are diagonalizable with real,
  resp. imaginary, spectrum.  As $\ad X$ preserves $\hf$ we therefore
  conclude that $\ad X_a$ and $\ad X_m$ preserve $\hf$ as well.
\end{proof}

\begin{cor} 
  Let $H\subseteq G$ be real spherical. Then $N_G(H)/H$ is an
  elementary group.
\end{cor}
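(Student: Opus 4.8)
The statement asserts that for a real spherical subgroup $H\subseteq G$ (with $G$ algebraic reductive), the quotient $N_G(H)/H$ is elementary, i.e.\ isomorphic to $M'\times A'$ with $M'$ compact and $A'$ a split torus. The Corollary follows almost directly from the preceding Proposition together with the definition of ``elementary''. The plan is to combine the Lie-algebra statement $\tilde\hf=\hf\oplus\tilde\cf$ with an analysis of the group $C:=\tilde H/H=N_G(H)/H$ at the group level, and then check that $C$ (or rather its identity component, since $H/H_0$ is finite so $\tilde H/\tilde H_0$ is finite too) is a product of a compact group and a split torus.

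\emph{Step 1: Identify the Lie algebra of $C$.} By the Proposition, the Lie algebra of $C=\tilde H/H$ is $\tilde\cf=\tilde\af\oplus\tilde\mf$, with $\tilde\af<\zf(\lf)_{\mathrm n\mathrm p}$ and $\tilde\mf<\zf(\lf)_{\mathrm c\mathrm p}+\lf_{\mathrm c}$. Here $\zf(\lf)_{\mathrm n\mathrm p}$ is an abelian subspace of $\sf$ (it sits inside $\af$), so $\tilde\af$ is abelian and exponentiates to a split torus; and $\zf(\lf)_{\mathrm c\mathrm p}+\lf_{\mathrm c}$ is a compactly embedded (reductive of compact type) subalgebra, so $\tilde\mf$ is the Lie algebra of a compact group. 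Since $\tilde\af$ and $\tilde\mf$ commute (one has real, the other imaginary ad-spectrum, and they lie in the reductive algebra $\zf(\lf)+\lf_{\mathrm c}$ in which they were obtained as orthogonal projections of a subalgebra), $\tilde\cf$ is a direct sum of an abelian split part and a compact-type part.

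\emph{Step 2: Pass from the Lie algebra to the group.} One should argue that $C=\tilde H/H$ is algebraic — this uses that $G$ is algebraic and $\tilde H=N_G(H)$ is the normalizer, hence algebraic, and $H$ has finitely many components so $H$ contains $\tilde H_0\cap\ker(\text{action})$ appropriately; more carefully, the connected component $C_0$ is the connected subgroup of the (algebraic) real reductive quotient with Lie algebra $\tilde\cf$. A real reductive group whose Lie algebra is $\tilde\af\oplus\tilde\mf$ with $\tilde\af$ central-split and $\tilde\mf$ of compact type has identity component of the form $A'\times M'_0$ with $A'\cong(\R^+)^{\dim\tilde\af}$ and $M'_0$ compact; this is precisely the condition $G'=P'$ characterizing elementary groups in the text. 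Finiteness of the component group then gives that $C$ itself is elementary (a compact extension of $A'\times M'_0$ is again of the form $M'\times A'$, since $\tilde\af$ is central so the finite component group acts trivially on it).

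\emph{Main obstacle.} The real content is entirely in the Proposition, which is already proved; the Corollary is essentially a packaging statement. The one point requiring care is the group-theoretic passage in Step 2: knowing the Lie algebra of $N_G(H)/H$ is ``split abelian $\oplus$ compact type'' one must ensure the group is genuinely a direct product $M'\times A'$ and not merely locally so — i.e.\ that no twisting occurs between the compact and split parts and that the finite component group does not spoil the split torus. This follows because $\tilde\af$ is central in $\tilde\cf$ (hence $A'$ is central in $C_0$) and a split torus has no nontrivial finite automorphisms arising here, so I would note that $C$ is abelian-by-compact with central split part, which forces $C\cong M'\times A'$. Thus the proof is simply: apply the Proposition, observe $\operatorname{Lie}(N_G(H)/H)=\tilde\af\oplus\tilde\mf$ with $\tilde\af$ split central and $\tilde\mf$ compact, and invoke the characterization of elementary groups.
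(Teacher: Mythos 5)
The paper gives no proof of this corollary, treating it as immediate from the preceding Proposition, and your route is essentially the intended one: read off $\operatorname{Lie}(N_G(H)/H)\cong\tilde\cf=\tilde\af\oplus\tilde\mf$ with $\tilde\af$ split abelian and $\tilde\mf$ of compact type, then invoke the characterization of elementary groups. Your Step 1 is fine, and you are right to single out the non-connectedness of $C=N_G(H)/H$ as the one place where care is needed. The difficulty is in Step 2.

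The justification you give there does not quite close the gap. You write that ``$\tilde\af$ is central so the finite component group acts trivially on it.'' But $\tilde\af$ being central in $\tilde\cf$ only makes $A'=\exp\tilde\af$ central in the \emph{identity component} $C_0$; it says nothing a priori about the conjugation action of $C/C_0$ on $A'$. A finite group can act nontrivially on a split torus $(\R^+)^l$ by $\GL(l,\Z)$-automorphisms, and the resulting extension need not be a direct product $M'\times A'$. (Concretely, $N_{\GL_2(\R)}(T)$ for $T$ the diagonal torus has Lie algebra that is purely split abelian, yet the nontrivial component swaps the two $\R^+$-factors, so the group is not of the form $M\times A$.) So the centrality you need is centrality in all of $C$, and that has to be argued, not read off from the Lie algebra.

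The missing input is already present in the proof of Lemma \ref{same Q}: there the paper constructs real characters $\chi_{\pi_1},\dots,\chi_{\pi_k}\colon C\to\R^\times$ whose common kernel $C_1$ is compact, so $C/C_1$ embeds in $(\R^\times)^k$. Characters are conjugation-invariant, and since $A'\cap C_1=\{1\}$ (compact meets torsion-free split torus trivially) the $\chi_{\pi_j}$ separate points of $A'$; hence every element of $C$ centralizes $A'$. This is what forces the finite component group to act trivially on the split part and makes $C\cong M'\times A'$ with $M'$ compact. So the conclusion is correct, and your overall plan matches the paper's intent, but the specific reason given for triviality of the component-group action should be replaced by the argument via the characters $\chi_{\pi_j}$ (equivalently, by the fact proved in Lemma \ref{same Q} that $C$ is compact-by-abelian with abelian quotient inside $(\R^\times)^k$).
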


\begin{cor} The normalizer $\tilde\hf$ is its own normalizer:
  $\tilde{\tilde\hf}=\tilde{\hf}$.
\end{cor}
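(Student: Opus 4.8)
The statement $\tilde{\tilde\hf}=\tilde\hf$ says that the normalizer of a spherical subalgebra is its own normalizer. The natural approach is to apply the structural result (\ref{direct sum}) not to $\hf$ but to $\tilde\hf$, and to observe that the whole setup is stable under passing from $\hf$ to $\tilde\hf$. Concretely, I would first note that $\tilde H=N_G(H)$ is again a spherical subgroup: since $\hf\subseteq\tilde\hf$ we have $\pf+\tilde\hf=\gf$, so $P\tilde H$ is open in $G/\tilde H$. Moreover, by Lemma \ref{same Q}, the same parabolic $\qf$ that is $Z$-adapted for $Z=G/H$ is also $\tilde Z$-adapted for $\tilde Z=G/\tilde H$. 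So all the objects $\qf=\lf+\uf$, the decomposition (\ref{second decomposition l}), etc., are literally the same whether we work with $\hf$ or with $\tilde\hf$.

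Next I would apply the Proposition to $\tilde\hf$ in place of $\hf$. This yields $\tilde{\tilde\hf}=\tilde\hf\oplus\tilde{\tilde\cf}$ with $\tilde{\tilde\cf}\subseteq \zf(\lf)_{\mathrm{np}}\oplus(\zf(\lf)_{\mathrm{cp}}+\lf_{\mathrm c})$, i.e.\ $\tilde{\tilde\cf}\subseteq\df$ where $\df=\zf(\lf)+\lf_{\mathrm c}$ is the compact part of $\lf$ complementary to $\lf_{\mathrm n}$. On the other hand, from the Proposition applied to $\hf$ we have $\tilde\hf=\hf\oplus\tilde\cf$ with $\tilde\cf\subseteq\df$, and $\lf\cap\hf=\cf\oplus\lf_{\mathrm n}$ with $\cf=\df\cap\hf$; hence $\lf\cap\tilde\hf=\tilde\cf\oplus\lf_{\mathrm n}$ with $\tilde\cf$ corresponding to a subalgebra of $\df$ whose image in $D=L/L_{\mathrm n}$ is $\tilde C=\tilde H\cap L/L_{\mathrm n}$. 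The key point is then that $\tilde\hf$ already contains every element of $\df$ that normalizes $\hf$: indeed, by construction of $\tilde\cf$ (via the commuting semisimple pieces $X_a,X_m$ argument in the Proposition), $\tilde\cf$ is the full intersection of $\tilde\hf$ with $\df$, and any $Y\in\df$ normalizing $\hf$ lies in $\nf_\gf(\hf)=\tilde\hf$, hence in $\tilde\hf\cap\df=\tilde\cf$.

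Now I would close the loop: since $\tilde{\tilde\cf}\subseteq\df$, every element of $\tilde{\tilde\cf}$ normalizes $\tilde\hf$, and in particular (because $\df$ acts on $\hf$ — as $\hf$ is the "non-elementary part" of $\tilde\hf$, characterized intrinsically, e.g.\ $\hf=[\tilde\hf,\tilde\hf]+(\text{non-compact part})$, which is preserved by any automorphism of $\tilde\hf$) it normalizes $\hf$ as well. Therefore $\tilde{\tilde\cf}\subseteq\tilde\hf\cap\df=\tilde\cf$. Combined with the obvious inclusion $\tilde\hf\subseteq\tilde{\tilde\hf}$ and $\tilde\cf\subseteq\tilde{\tilde\cf}$, this forces $\tilde{\tilde\cf}=\tilde\cf$ and hence $\tilde{\tilde\hf}=\tilde\hf\oplus\tilde{\tilde\cf}=\tilde\hf\oplus\tilde\cf$; but $\tilde\cf\subseteq\tilde\hf$ already, so $\tilde{\tilde\hf}=\tilde\hf$. (Alternatively, and perhaps more cleanly: apply the Corollary $N_G(H)/H$ elementary twice — $N_G(\tilde H)/\tilde H$ is elementary, and an elementary group has no nontrivial normal subgroup acting as $\tilde H/H$ does, forcing $N_G(\tilde H)=\tilde H$ on the level of Lie algebras.)

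**Main obstacle.** The delicate step is showing that an element of $\df$ normalizing $\tilde\hf$ automatically normalizes $\hf$; this requires identifying $\hf$ inside $\tilde\hf$ in a way that is manifestly invariant under $\Ad$-automorphisms of $\tilde\hf$ preserving the grading by $\df$. The cleanest route is to use that $\hf$ contains $\lf_{\mathrm n}$ and that the quotient $\tilde\hf/\hf\cong\tilde\cf$ is abelian and central modulo the non-compact part, so $\hf$ is recovered as $\lf_{\mathrm n}$ plus the kernel of the adjoint action of $\tilde\hf$ on a suitable quotient — but one must check this does not lose information in the compact directions, which is exactly where the $X_a,X_m$ decomposition from the Proposition is needed again.
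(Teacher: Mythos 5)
Your argument correctly reduces to the claim that every element of $\tilde{\tilde\hf}$ normalizes $\hf$, not merely $\tilde\hf$, and you correctly flag this as the crux --- but you do not close it. The suggested intrinsic characterization ``$\hf=[\tilde\hf,\tilde\hf]+(\text{non-compact part})$'' does not hold in general: the decomposition $\tilde\hf=\hf\oplus\tilde\cf$ is a direct sum of an ideal and a complementary reductive subalgebra, and an automorphism of $\tilde\hf$ need not preserve the ideal $\hf$, since compact (semisimple or abelian) factors of $\hf$ can in principle be mixed with compact factors of $\tilde\cf$ --- exactly the ``compact direction'' difficulty you anticipate and then leave open. The alternative you sketch, that elementariness of $N_G(\tilde H)/\tilde H$ alone forces the normalizer chain to stabilize, also does not follow: both steps in a chain $\hf\subsetneq\tilde\hf\subsetneq\tilde{\tilde\hf}$ could a priori have elementary quotients. (A minor side issue: $\tilde\hf\cap\df=\cf\oplus\tilde\cf$, not $\tilde\cf$; this does not change the structure of your argument but the identity you use is off.)

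The paper's proof uses a genuinely different mechanism, independent of the local structure theorem, and it is exactly the missing ingredient. Since $\tilde\hf/\hf$ is elementary, hence reductive, the nilpotent radicals satisfy $\hf_u=\tilde\hf_u$; this is an intrinsic characterization of $\hf_u$ inside $\tilde\hf$, so $\tilde{\tilde\hf}$ automatically normalizes $\hf_u$. Then one passes to the reductive algebra $\tilde\hf/\hf_u$, in which $\hf/\hf_u$ is an ideal, and invokes the rigidity fact that a connected group acting by algebraic automorphisms on a reductive Lie algebra acts by inner automorphisms, hence preserves every ideal. That rigidity statement is precisely what replaces the (false) claim that $\hf$ is an automorphism-invariant subalgebra of $\tilde\hf$; without it, your argument has a genuine gap.
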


\begin{proof} It suffices to show that the normalizer
  $\tilde{\tilde\hf}$ of $\tilde\hf$ normalizes $\hf$, as well.  Let
  $\tilde H=N_G(\hf)$.  Observe that $\tilde H/H$ is an elementary
  real algebraic group, it is in particular reductive. Thus,
  $\tilde\hf_u=\hf_u$ for the nilpotent radicals. This implies that
  $\tilde{\tilde\hf}$ normalizes $\hf_u$ and that $\tilde H/H_u$ is a
  reductive real algebraic group. A connected group, which acts by
  algebraic automorphisms on a reductive Lie group, acts by inner
  automorphisms, hence fixes every ideal. Thus
  $\hf/\hf_u\subseteq\tilde\hf/\hf_u$ is normalized by
  $\tilde{\tilde\hf}$, as well.
\end{proof}

\begin{rmk}
  On the group level, the statement is wrong. Let, e.g., $G=GL(2,\R)$
  and $H=\left(\begin{array}{cc}*&0\\0&1\end{array}\right)$. Then
  $N_G(H)=T=\left(\begin{array}{cc}*&0\\0&*\end{array}\right)$. Thus
  $N_G(N_G(H))=N_G(T)$ is strictly larger than $N_G(H)=T$.

\end{rmk}

\end{document}